\documentclass[11pt]{amsart}

\usepackage[usenames,dvipsnames,svgnames,table]{xcolor}
\usepackage[hyphens]{url}
\usepackage[pagebackref,linktocpage=true,colorlinks=true,linkcolor=Blue,citecolor=BrickRed,urlcolor=RoyalBlue]{hyperref}
\usepackage[msc-links,abbrev]{amsrefs}
\usepackage{amsmath,amsthm,amssymb}
\usepackage{mathtools}

\newtheorem{theorem}{Theorem}[section]
\newtheorem{lemma}[theorem]{Lemma}
\newtheorem{proposition}[theorem]{Proposition}

\theoremstyle{definition}
\newtheorem{note}[theorem]{Note}

\DeclareMathOperator{\tr}{tr\vphantom{g}}

\DeclareMathOperator{\sgn}{sgn}
\DeclareMathOperator{\dist}{dist}

\newcommand{\R}{\mathbf R}
\newcommand{\Sph}{\mathbf S}
\newcommand{\B}{\mathbf B}
\newcommand{\cE}{\mathcal E}
\newcommand{\bcE}{\overline{\cE}}
\newcommand{\eps}{\varepsilon}
\newcommand{\1}{\mathbf 1}
\newcommand{\C}{\mathcal{C}}

\renewcommand{\tilde}{\widetilde}

\title{The Cartan--Hadamard conjecture in dimension five}

\author{Shibing Chen}
\address{School of Mathematical Sciences, University of Science and Technology of China, Hefei, Anhui 230026, China}
\email{chenshib@ustc.edu.cn}

\author{Mohammad Ghomi}
\address{School of Mathematics, Georgia Institute of Technology,
Atlanta, GA 30332}
\email{ghomi@math.gatech.edu}
\urladdr{ghomi.math.gatech.edu}

\author{Peng Wang}
\address{School of Mathematics and Statistics, FJKLAMA, Key Laboratory of Analytical Mathematics and Applications, Fujian Normal University, Fuzhou, China}
\email{pengwang@fjnu.edu.cn,\;netwangpeng@163.com}

\date{\today\,(Last Typeset)}
\subjclass[2020]{Primary 53C20, 53C42; Secondary 49Q20, 53C24.}

\keywords{Cartan--Hadamard conjecture, isoperimetric inequality,
constant mean curvature, Jacobi fields, Green function, isoperimetric profile.}

\begin{document}

\begin{abstract}
We show that the sharp Euclidean isoperimetric inequality holds for domains in complete
simply connected Riemannian $5$-manifolds of nonpositive sectional
curvature, which establishes the Cartan--Hadamard conjecture in that dimension.  The main step is a sharp inequality
for constant-mean-curvature hypersurfaces, proved via
integrals over pairs of boundary points, in the spirit of Banchoff--Pohl,
together with an estimate for Jacobi fields along geodesic chords. The inequality persists for boundaries of isoperimetric regions in geodesic balls, whose mean curvature is constant only on the free part. An isoperimetric-profile argument, after Kleiner, completes the proof. Our method also gives a new proof in dimension $3$.
\end{abstract}

\maketitle

\section{Introduction}

A Cartan--Hadamard manifold $M^n$ is a complete simply connected
Riemannian $n$-manifold of nonpositive sectional curvature. The
Cartan--Hadamard conjecture \cite{aubin1976,burago-zalgaller1988,gromov1999}
asserts that domains in $M^n$ satisfy the Euclidean
isoperimetric inequality. We prove the conjecture in dimension $n=5$, the
first case not previously known. Let $\B^n$
be the unit ball in Euclidean space $\R^n$, and $\Sph^{n-1}:=\partial \B^n$ be the unit sphere. 

\begin{theorem}\label{thm:main}
Let $\Omega\subset M^5$ be a 
bounded set, and $\Gamma\coloneqq\partial\Omega$. Suppose that the perimeter $|\Gamma|$ is finite and the volume $|\Omega|>0$. Then
\begin{equation}\label{eq:main}
\frac{|\Gamma|^5}{|\Omega|^4}
\geq
\frac{|\Sph^4|^5}{|\B^5|^4},
\end{equation}
with equality only if $\Omega$ is isometric to a Euclidean ball.
\end{theorem}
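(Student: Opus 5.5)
The plan is to follow Kleiner's isoperimetric-profile strategy. The main geometric input is a lower bound on the total mean curvature of constant-mean-curvature boundaries. Fix a large geodesic ball $B_R\subset M^5$ and consider the relative profile
$$I_R(v):=\inf\{|\partial\Omega|:\ \Omega\subset B_R,\ |\Omega|=v\},\qquad 0<v<|B_R|.$$
By compactness and regularity theory (regularity is good in dimension $5$, since singular sets have codimension at least $8$), minimizers $\Omega$ exist. Their free boundary $\Gamma\cap \operatorname{int}B_R$ is smooth with constant mean curvature $H$, normalized as the average of the principal curvatures, so $H=1/r$ on a Euclidean sphere of radius $r$. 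On the part touching $\partial B_R$ one has $H$ at most the mean curvature of $\partial B_R$ in the viscosity sense. Standard first-variation arguments give the one-sided bounds
$$D^{\pm}I_R(v)\ \text{lie between the values } 4H \text{ over minimizers at volume } v.$$
It therefore suffices to show $4H\ge 4\bigl(|\Sph^4|/I_R\bigr)^{1/4}$, that is,
$$H^4\,|\Gamma|\;\ge\;|\Sph^4| \qquad(\ast)$$
for every such minimizer. Granting $(\ast)$, $\bigl(I_R^{5/4}\bigr)'\ge 5|\Sph^4|^{1/4}$ in the barrier sense. Integrating from $v=0$, where $I_R(v)\to0$, gives $I_R(v)^{5/4}\ge 5|\Sph^4|^{1/4}v$. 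This is exactly \eqref{eq:main} for minimizers in $B_R$, hence for all bounded $\Omega$ once $R$ is large enough that $\Omega\subset B_R$.

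To prove $(\ast)$, I would work with integrals over pairs of boundary points, in the spirit of Banchoff--Pohl. For $p,q\in\Gamma$, join them by the unique geodesic chord; by Cartan--Hadamard it is unique and depends smoothly on the endpoints. Then consider a double integral over $\Gamma\times\Gamma$ of a kernel built from the angles the chord makes with the unit normals at $p$ and $q$, weighted by a power of the chord length. In $\R^5$ this integral is computed by a Crofton/divergence-type identity and equals the sphere value. One integration, via the divergence theorem on $\Gamma$ applied to the field $q\mapsto$ tangential part of $\nabla_q d(p,q)^{k}$, produces a term involving $H(q)$ together with the Laplacian of the distance. The comparison inequality $\Delta d\ge (n-1)/d$ then makes every error term have a favorable sign; this comes from Jacobi field estimates along geodesics in nonpositive curvature. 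Combining with constant $H$ and H\"older yields $(\ast)$. For boundaries meeting $\partial B_R$, the contact part has mean curvature at most $H$, and the inequalities are arranged so that this one-sided information suffices.

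The main obstacle is finding the right kernel and exponent so that the Jacobi-field error terms all have one sign in dimension $5$. Here I would try the kernel $\langle \nu_p,\dot\gamma\rangle\langle\nu_q,\dot\gamma\rangle\, d^{-(n-3)}$ first. A Riccati/Rauch comparison along the chord should control the second fundamental form of distance spheres from below by the Euclidean $1/d$, uniformly along the chord. The dimension-specific algebra presumably closes precisely for $n=3,5$. The boundary-contact case needs the same estimate with inequalities in place of equalities.

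For the equality case: equality in \eqref{eq:main} forces equality in $(\ast)$ along the whole profile, hence equality in the Jacobi comparison. This makes sectional curvatures vanish on the convex hull of $\Omega$, so $\Omega$ is a flat region that is a Euclidean isoperimetric minimizer, namely a round ball.
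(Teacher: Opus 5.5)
Your outer reduction matches the paper's. You use Kleiner's profile argument as implemented by Ghomi--Spruck: $\C^{1,1}$ isoperimetric regions in $B_R$, $\mathcal I'=H_0$ a.e., and integration of $(\mathcal I^{5/4})'$. Your $(\ast)$ is Proposition~\ref{prop:obstacle} in average-curvature normalization; you also need $H_0>0$, which the paper gets by integrating $\Delta_\Gamma\dist(o,\cdot)^2$.

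The gap is that $(\ast)$, which carries all of the content, is not proved. You leave the kernel and weight open, and the one mechanism you state fails. You claim that the comparison $\Delta d\ge(n-1)/d$ gives every curvature error a favorable sign. For the Minkowski--Green identity this is right in spirit: the excess enters as $+\psi'(r)\tr_\Gamma\cE_p$. Even there you need the full Hessian comparison $\cE_p\ge0$, since $\tr_\Gamma\nabla^2r_q=\Delta r_q-\nabla^2r_q(\nu,\nu)$.

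But that identity holds for every closed hypersurface and does not encode that $\Gamma$ bounds a domain. In dimension $5$ the calibration of Section~\ref{sec:flat-five} forces $b=8\neq0$, so the degree identity $\int_{\Gamma\times\Gamma}uv(u^2+v^2)/J=2^{-3}|\Sph^4|\,|\Gamma|$ is indispensable. Its curved form carries the polar Jacobian $J\ge r^4$. For chords with $uv>0$ this produces the \emph{unfavorable} correction $-8uv(u^2+v^2)(1-r^4/J)$ in \eqref{eq:G-definition}, which no sign condition on $\cE_p,\cE_q$ can absorb.

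The missing idea is the Jacobi-field estimate of Proposition~\ref{lem:jacobi-endpoint}, $\tr_\Gamma\cE_p+\tr_\Gamma\cE_q\ge\frac{2|uv|}{r}(1-r^4/J)$. It is proved from the symplectic relations of the Jacobi transfer matrix, the block positivity \eqref{eq:block-positive}, and the determinant/trace-norm bound \eqref{eq:det-trace-norm}. Together with the polynomial inequality $F-4uvQ\ge0$, it yields $\tilde F\ge0$ (Lemma~\ref{lem:G-positive}).

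Your concrete guesses would not close either. The weight must be the Green function $\psi=2\log r-2/r^2$ of $\Sph^4$; its pole is what makes $\int_\Gamma\tilde\Delta_\Gamma\psi_q=-4|\Sph^3|$ independent of $\Gamma$. Within this scheme, Note~\ref{note:dimensions} shows that sharpness forces $\psi'=\alpha r^{-3}+\beta r^{-1}$ with $\alpha=2\beta\neq0$, which no single power $d^k$ satisfies.

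Your kernel is, up to sign, $uv\,r^{3-n}=r^2\,u\,\textup{Jac}(P_p)$ in $\R^n$. By the crossing count, its $q$-integral equals $2\int_\Omega\langle x-p,-\nu_p\rangle|x-p|^{1-n}\,dx$, which depends on $\Gamma$ rather than being a universal multiple of $|\Gamma|$. The power of $r$ must exactly cancel $\textup{Jac}(P_p)=v/J$. That is why $u^3v/J$ is used, and why $J$, not $r^4$, appears in the curved case.

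For the contact set you need the explicit sign $u(p,q)\ge0$ when $p\in\Gamma\cap\partial B$, which follows from convexity of geodesic balls. This makes the defect term $(H_0-H(p))\,u(p,q)$ nonnegative.

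Finally, the equality case should not go through the whole profile. An extremal $\Omega$ is itself isoperimetric, hence smooth and CMC. The first variation of the deficit then gives the equality value $H^4|\Gamma|=4^4|\Sph^4|$ (sum-of-curvatures normalization). The rigidity in Theorem~\ref{thm:CMC} then gives, in order:
\begin{itemize}
\item convexity of $\Omega$ from $u=v=r/2$;
\item flatness from $J=r^4$ on every chord;
\item roundness by Alexandrov.
\end{itemize}
Without first proving convexity, the chords of $\Gamma$ give no control of curvature on a convex hull.
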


The Cartan--Hadamard conjecture had been proved earlier only in dimensions $2$, $3$, and $4$
by Weil \cite{weil1926}, Kleiner \cite{kleiner1992}, and Croke
\cite{croke1984}, respectively. Related refinements and alternative
approaches include \cite{beckenbach-rado1933,schulze2008,kloeckner-kuperberg2019}.
Recent results have established the conjecture for local perturbations of the Euclidean metric
\cite{ghomi-stavroulakis2026}, for Cartan--Hadamard manifolds with
 large nullity \cite{ghomi2026-nullity}, and under  pinched negative curvature
in dimension
$5$ \cite{ghomi2026-pinched}. See \cite{ghomi-spruck2022,kloeckner-kuperberg2019,ritore2023} for more references and background.
The main step in the proof of Theorem~\ref{thm:main} is a sharp
inequality for constant-mean-curvature (CMC) hypersurfaces:

\begin{theorem}\label{thm:CMC}
Let $\Gamma^4\subset M^5$ be a smooth compact embedded hypersurface.
Suppose that the mean curvature $H$ of $\Gamma$ is a positive constant. Then
\begin{equation}\label{eq:CMC}
|\Gamma|\geq\left(\frac{4}{H}\right)^4|\Sph^4|.
\end{equation}
Equality holds only if $\Gamma$ bounds a Euclidean ball
of radius $4/H$.
\end{theorem}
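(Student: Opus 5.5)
The plan is to follow the Banchoff--Pohl strategy announced in the abstract: integrate a suitable kernel over pairs of boundary points $(p,q)\in\Gamma\times\Gamma$ and compare two evaluations of the same double integral.

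\textbf{Setup.} Let $\Omega$ be the domain bounded by $\Gamma$, and let $\nu$ be the outward unit normal. For $p\neq q$ in $\Gamma$, let $\gamma_{pq}$ be the unique geodesic chord from $p$ to $q$, which exists since $M$ is Cartan--Hadamard. Write $r=d(p,q)$ and $u_{pq}=\gamma_{pq}'(0)$. I will use the Green-type kernel $G(p,q)=c\,r^{-3}$, which in dimension $5$ is the Euclidean fundamental solution of the Laplacian. The basic object is
\[
I=\int_\Gamma\int_\Gamma \langle \nabla_p r,\nu_p\rangle\,\langle\nabla_q r,\nu_q\rangle\, K(r)\,dp\,dq
\]
for a suitable radial weight $K$, or, equivalently, a divergence identity for the vector field $X_p(x)=\nabla_x \phi(d(p,x))$ integrated over $\Omega$.

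\textbf{Two evaluations.} The first evaluation uses the divergence theorem in $q$ for fixed $p$, and then again in $p$. By Laplacian comparison in nonpositive curvature, $\Delta r\ge (n-1)/r$, so the volume term produced by $\operatorname{div}X_p$ has a favorable sign. In dimension $5$ I will pick $\phi$ so that $\operatorname{div}X_p\ge$ the Euclidean value. This yields an inequality of the form $I\ge C_1|\Omega|$ or $I\ge C_1|\Gamma|$. The second evaluation uses the CMC condition. The first variation of the chord length gives $\nabla_p r=-u_{pq}$, and the second variation involves Jacobi fields along $\gamma_{pq}$. I will integrate by parts on $\Gamma$ itself, using $\Delta_\Gamma$ of a function of $r$. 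The tangential Laplacian of $r$ along $\Gamma$ splits into the ambient Hessian of $r$ restricted to $T\Gamma$ plus $H\langle\nabla r,\nu\rangle$. Since $H$ is constant, the resulting term $\int\int H\,\langle\nabla r,\nu\rangle(\cdots)$ can be pulled out, which gives an identity relating $H\cdot I$ to integrals of $\operatorname{Hess}\,r$ on $T_p\Gamma$.

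\textbf{Jacobi field estimate and conclusion.} The key estimate is that the Hessian of $r$ along the chord, controlled via Jacobi fields vanishing at one endpoint, satisfies $\operatorname{Hess}r\ge \frac1r(g-dr^2)$, with the correct coupling between the two endpoints. That coupling is what makes a two-point (Banchoff--Pohl) argument work: I need the mixed second derivative $\nabla_p\nabla_q r$ bounded in terms of Jacobi fields with both endpoints free. I expect this mixed estimate to be the main obstacle. Pure Rauch comparison controls one-sided Hessians, but the mixed term involves the Jacobi operator with boundary data at both ends. In dimension $5$ the exponent $r^{-3}$ must be chosen so that the sign of the mixed term is controlled, presumably through a convexity argument for $t\mapsto |J(t)|^2$ combined with the Riccati comparison. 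Combining the two evaluations should give $H^4|\Gamma|\ge 4^4|\Sph^4|$, with each inequality used being an equality in $\R^5$ for round spheres. For the equality case, equality in the Laplacian and Jacobi comparisons forces vanishing curvature on all chords, hence on $\Omega$. The domain is then flat, and by Alexandrov's theorem it is a round ball of radius $4/H$.
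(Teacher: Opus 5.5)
There is a genuine gap. The sentence ``combining the two evaluations should give $H^4|\Gamma|\ge 4^4|\Sph^4|$'' is where the whole proof lives, and nothing in your setup produces it.

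A divergence identity over $\Omega$ with Laplacian comparison outputs a volume integral (for $\phi=r^2/2$, a multiple of $|\Omega|$). Turning a relation among $H$, $|\Gamma|$ and such a volume term into a lower bound on $|\Gamma|$ is exactly where the classical Minkowski route invokes the isoperimetric inequality. That is circular here, since Theorem~\ref{thm:CMC} is the input to the isoperimetric-profile argument for Theorem~\ref{thm:main}.

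The paper avoids volume entirely. It uses two identities on $\Gamma\times\Gamma$ whose values are universal constants, both generated by a singularity at a point of $\Gamma$:
(i) A Minkowski--Green identity $\int_\Gamma\tilde\Delta_\Gamma\psi_q=-C_\psi|\Sph^3|$, where $\psi'\sim C_\psi r^{-3}$ is the pole of a Green function on the $4$-dimensional surface. So $\psi\sim r^{-2}$, not the ambient kernel $r^{-3}$, whose surface Laplacian is $\sim r^{-5}$ and is not integrable on $\Gamma^4$. Specifically $\psi=2\log r-2/r^2$, the Green function of $\Sph^4$ along chords, which makes round spheres extremal.
(ii) A degree identity $\int_\Gamma u^3v/J=2^{-4}|\Sph^4|$ for each $p$. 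It comes from the area formula for geodesic radial projection from $p\in\Gamma$ plus the crossing-number count, and it is \emph{exact} in a Cartan--Hadamard manifold with the polar Jacobian $J$ in place of $r^4$. The weight $u^3$ is what makes the integrand a function of the direction alone.

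With $H=4$, the combination $5+\tilde\Delta_\Gamma\psi_q+\tilde\Delta_\Gamma\psi_p+8uv(u^2+v^2)/J$ then integrates to $5|\Gamma|(|\Gamma|-|\Sph^4|)$, and everything reduces to pointwise nonnegativity. Your object $\int_{\Gamma\times\Gamma}uv\,K(r)$ is not what either identity produces; the Minkowski-type identity on $\Gamma$ yields the term $-H\psi'(r)u$, which is linear in $u$. You also give no reason why dimension $5$ matters. In the paper it enters through this calibration ($a=5$, $b=8$), which is possible only for $n=3,5$ (Note~\ref{note:dimensions}).

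Second, the curvature bookkeeping differs from what you anticipate. In the calibrated integrand, curvature enters twice with opposite signs:
\begin{itemize}
\item the Hessian excesses $\tr_\Gamma\cE_p+\tr_\Gamma\cE_q\ge0$ help, because $\psi'>0$;
\item the Jacobian excess $1-r^4/J\ge0$ hurts whenever $uv>0$, because the degree term carries the coefficient $b=8>0$, forced by sharpness.
\end{itemize}
So a one-sided comparison that bounds a flux term from below, as your Laplacian-comparison step would, is useless for that term. You need the exact identity with $J$, and then the pointwise estimate $\tr_\Gamma\cE_p+\tr_\Gamma\cE_q\ge\frac{2|uv|}{r}\big(1-\frac{r^4}{J}\big)$.

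Your instinct about the two-endpoint Jacobi problem is right. The matrix $\Lambda$ sending $(Z(0),Z(r))$ to $(-Z'(0),Z'(r))$ has off-diagonal blocks $-A(r)^{-1}$ and its transpose, whose determinant encodes $J$. The bound $\Lambda\ge\Lambda_0$ gives $\|I-rA(r)^{-T}\|_{S_1}\le r\sqrt{\tr\bcE_p\,\tr\bcE_q}$, and combined with $1-\det W\le\sum_i(1-s_i(W))$ this yields the estimate. But the goal is this trade of Jacobian defect for Hessian excess, not a sign for the mixed term. The estimate must then be fed into an explicit completion of squares to show $\tilde F\ge0$.

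Your rigidity sketch matches the paper's in outline: equality forces $u=v=r/2$ and $J=r^4$, hence $K=0$ along chords. You also need the geodesic convexity of $\Omega$ that $u=v=r/2$ implies. That makes $\Omega$ star-shaped, so $\exp_{x_0}^{-1}$ is an isometry on it before Alexandrov's theorem applies.
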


The proof of Theorem~\ref{thm:CMC} involves integrals
over $\Gamma\times\Gamma$. Banchoff--Pohl \cite[Thm.~1]{BP} showed that for a
closed embedded hypersurface $\Gamma\subset\R^n$ the enclosed volume is bounded
above by a constant times $\int_{\Gamma\times\Gamma}r^{2-n}$, where
$r$ is the length of (geodesic) chords of $\Gamma$, with equality only for
spheres. Hoisington \cite[Thm.~1.1]{hoisington2021} extended this inequality to
Cartan--Hadamard manifolds, using Jacobi fields along the chords of $\Gamma$; see also \cite{teufel1993,ghomi-howard2015,hoisington-mcgrath2022}. The novel feature of our work is to use singular functions of $r$ to estimate mean curvature.

We first establish \eqref{eq:CMC} in $\R^5$ (Section~\ref{sec:euclidean}), by combining two integral formulas: a Minkowski--Green identity generated by the weight function $2\log r-2/r^2$, through which the mean curvature enters,
and a degree formula for radial projection, which encodes that
$\Gamma$ encloses a domain. These identities extend to Cartan--Hadamard manifolds via Jacobi fields along chords of $\Gamma$ (Section~\ref{sec:two-point}). Combining  these generalized identities and using an estimate for Jacobi fields (Section~\ref{sec:CMC}) yields \eqref{eq:CMC}.

To obtain \eqref{eq:main}, we then use the isoperimetric-profile
method of Kleiner \cite{kleiner1992}, as developed in higher
dimensions by Ghomi--Spruck \cite{ghomi-spruck2022}, which reduces
\eqref{eq:main} to the mean-curvature estimate \eqref{eq:CMC} for
isoperimetric regions trapped inside geodesic balls. In dimensions $n\leq 7$, such a region has
$\C^{1,1}$ boundary, which may touch the enclosing sphere, and its
mean curvature is constant only on the free part.
We will show that \eqref{eq:CMC} persists for
these regions (Section~\ref{sec:obstacle}), and \eqref{eq:main} follows.

The special role of dimension $5$ here enters through
a calibration scheme used to combine the Minkowski--Green and degree identities (Section~\ref{sec:flat-remainder}), which, with weights depending only on the length
of the chords, is confined to dimensions
$3$ and $5$ (Note~\ref{note:dimensions}). So our method also yields
a new proof of the isoperimetric inequality in dimension $3$ (Note~\ref{note:dimension-three}).
In a sequel to this work \cite{chen-ghomi-wang-general}, however,
we will show that this scheme extends to dimension $4$
and several higher dimensions by allowing the weights
to depend also on the angles between the chords
and $\Gamma$, and by generalizing the degree identity.

\section{The Euclidean Case}\label{sec:euclidean}

When $M=\R^5$, each component of  $\Gamma$ in
Theorem~\ref{thm:CMC} is a sphere of radius $4/H$ by Alexandrov's
theorem \cite{alexandrov1962}, and thus the CMC inequality \eqref{eq:CMC} is immediate. Here we give a direct proof, which extends to nonpositive curvature.

Throughout this work, unless noted otherwise, we assume that $\Omega\subset M$ is a
\emph{domain}, i.e., an open set with compact closure, and that
$\Gamma\coloneqq\partial\Omega$ is a \emph{smooth} ($\C^\infty$) embedded
hypersurface, with outward unit normal $\nu$. A \emph{chord} of $\Gamma$ is a geodesic segment between a pair of its points. The \emph{mean curvature} $H$ is the sum of the principal curvatures of $\Gamma$ with respect to $\nu$. For a submanifold $X\subset M$, we write $|X|$ for the measure of
$X$ in its own dimension.

\subsection{Overview}\label{sec:flat-overview}

We begin by considering Theorem \ref{thm:CMC} for hypersurfaces
$\Gamma^{n-1}\subset\R^n$, $n\geq3$. The restriction to $n=5$ enters
only in Sections~\ref{sec:flat-five} and \ref{sec:F}. By scaling, it suffices to consider $H=n-1$, and to prove
that then $|\Gamma|\geq|\Sph^{n-1}|$.
To motivate our approach, let us recall the classical route
to this inequality, which is based on the following equations:
$$
\int_\Gamma H\langle x,\nu\rangle=(n-1)|\Gamma|, \qquad \qquad\int_\Gamma\langle x,\nu\rangle=n|\Omega|.
$$
The first is Minkowski's classical formula \cite{hsiung1954,montiel-ros2009}, and the second follows quickly from the divergence theorem.
Since $H=n-1$, these formulas give $|\Gamma|=n|\Omega|$. The isoperimetric inequality then yields $|\Gamma|\geq|\Sph^{n-1}|$.
 This route is not available to us, since here the CMC inequality is
meant to imply the isoperimetric inequality. Instead we will develop
a pair of substitutes for the formulas above, whose combination
yields the CMC inequality in $\R^5$ directly. Both
substitutes arise from the divergence theorem, but involve chord lengths of $\Gamma$ rather than the position vector $x$;
this feature will later allow us to transplant them to
Cartan--Hadamard manifolds.

\subsection{The Minkowski--Green identity}\label{sec:flat-green}
Write $\nu_p\coloneqq\nu(p)$,  $\nu_q\coloneqq\nu(q)$ and define the following functions on pairs of distinct points $p$, $q\in\Gamma$:
$$
r(p,q)\coloneqq|p-q|,\quad
\theta(p,q)\coloneqq\frac{q-p}{r(p,q)},\quad
u(p,q)\coloneqq\langle\theta(p,q),-\nu_p\rangle,\quad
v(p,q)\coloneqq\langle\theta(p,q),\nu_q\rangle.
$$
Thus $u$ is the cosine of the angle between the directed chord $pq$
and the inward normal at $p$, and $v$ is the cosine of the angle
between $pq$ and the outward normal at $q$.
For a function $f$ defined near $\Gamma$, let $\nabla f$ and
$\nabla^2f$ denote its ambient gradient and Hessian. Furthermore, let
$\nabla_\Gamma f\coloneqq(\nabla f)^\top$ be the
tangential component of $\nabla f$ along $\Gamma$, and $\Delta_\Gamma$ be the intrinsic Laplace--Beltrami operator. Then
\begin{equation}\label{eq:laplacian-convention}
\Delta_\Gamma f
=
\tr_\Gamma\nabla^2f-H\langle\nabla f,\nu\rangle,
\end{equation}
where $\tr_\Gamma\nabla^2f$ is the trace of
$\nabla^2f$ restricted to the tangent bundle $T\Gamma$, i.e.,
$\Delta f-\nabla^2f(\nu,\nu)$.
The weight functions $\psi$ we consider for the chord inequalities below
are smooth functions on $(0,\infty)$ with a pole at $0$ such that
\begin{equation}\label{eq:psi-pole}
\psi'(r)=C_\psi\,r^{2-n}+O(r^{3-n}),
\qquad
\psi''(r)=(2-n)C_\psi\,r^{1-n}+O(r^{2-n}),
\end{equation}
for some constant $C_\psi>0$. Consequently, up to an
additive constant, $\psi(r)=C_\psi\log r+O(r)$ for $n=3$, and
$
\psi(r)=-C_\psi r^{3-n}/(n-3)+o(r^{3-n})
$
for $n\geq4$. The role of the pole here is to
produce nonzero integrals whose value is independent of $\Gamma$; Banchoff--Pohl \cite[\S5]{BP} considered the pole-free
weights $\psi(r)=r^k$, $k\geq1$. Fix $q\in\Gamma$, and set
$r_q(\cdot)\coloneqq|\cdot - q|$,
$\psi_q\coloneqq\psi\circ r_q$.
For $p\neq q$ we have $\nabla r_q(p)=-\theta$ and
$
\nabla^2r_q(p)(w,w)
=(|w|^2-\langle\theta,w\rangle^2)/r.
$
Tracing over $T_p\Gamma$, since $H=n-1$,
\eqref{eq:laplacian-convention} gives
\begin{equation}\label{eq:flat-Ap-general}
\tilde{\Delta}_\Gamma\psi_q
\coloneqq
\Delta_\Gamma\psi_q\Big|_{\Gamma\setminus\{q\}}
=\psi''(r)(1-u^2)
+\psi'(r)\Big(\frac{n-2+u^2}{r}-(n-1)u\Big).
\end{equation}
Near $q$ the pole dominates: since $u=O(r)$ on
$\Gamma$, $|\nabla_\Gamma r_q|=\sqrt{1-u^2}\to1$, and
the level set $\{r_q=\eps\}\subset\Gamma$ is close to a
sphere of radius $\eps$ in $T_q\Gamma$; hence the flux
of the leading term $C_\psi r_q^{\,2-n}\nabla_\Gamma r_q$
of $\nabla_\Gamma\psi_q$ through $\{r_q=\eps\}$, with
respect to the conormal pointing toward $q$, tends to
$-C_\psi|\Sph^{n-2}|$, while that of the $O(r^{3-n})$
term in \eqref{eq:psi-pole} tends to zero. Since $u=O(r)$
and the terms of order $r^{1-n}$ in $\psi''+(n-2)\psi'/r$
cancel by \eqref{eq:psi-pole},
$\tilde{\Delta}_\Gamma\psi_q=O(r^{2-n})$, which is integrable and defines an absolutely continuous measure
$\tilde{\Delta}_\Gamma\psi_q\,dA_\Gamma$, where $dA_\Gamma$ is the area
element of $\Gamma$. Thus,
\begin{equation}\label{eq:flat-distributional}
\Delta_\Gamma\psi_q
=\tilde{\Delta}_\Gamma\psi_q\,dA_\Gamma
+C_\psi|\Sph^{n-2}|\,\delta_q
\end{equation}
in the sense of distributions, where $\delta_q$ is the
Dirac mass at $q$. By the divergence theorem,
\begin{equation}\label{eq:flat-green-mass}
\int_\Gamma \tilde{\Delta}_\Gamma\psi_q=-C_\psi|\Sph^{n-2}|.
\end{equation}
The same computation with $\psi(r)=r^2/2$ yields Minkowski's
formula $\int_\Gamma\big((n-1)-H\langle x-q,\nu\rangle\big)=0$,
while \eqref{eq:flat-distributional} exhibits the point-mass
singularity characteristic of a Green function. Hence we call
\eqref{eq:flat-green-mass} the \emph{Minkowski--Green identity}.

\subsection{The degree identity}\label{sec:flat-degree}

Fix $p\in\Gamma$, and let $P_p(q)\coloneqq(q-p)/|q-p|$
be radial projection onto the unit sphere
$\Sph_p\subset T_p\R^n$, so that $P_p(q)=\theta(p,q)$.
Let
$$
\Sph_p^-\coloneqq\big\{\theta\in\Sph_p:\langle\theta,\nu_p\rangle<0\big\}
$$
be the open hemisphere centered at $-\nu_p$. Then
$\theta\in\Sph_p^-$ if and only if the ray $p+t\theta$, $t>0$,
initially enters $\Omega$. This ray meets $\Gamma$ in $P_p^{-1}(\theta)$,
and at a transverse intersection $q$ the sign of
$v(p,q)=\langle\theta,\nu_q\rangle$ records whether the
ray is leaving $\Omega$ ($v>0$) or entering it ($v<0$).
For almost every $\theta$ the intersections are
transverse and finite, and since exits and entries
alternate along the ray,
\begin{equation}\label{eq:crossing-number}
\sum_{q\in P_p^{-1}(\theta)}\sgn v(p,q)
=\1_{\Sph_p^-}(\theta),
\end{equation}
the indicator function of 
$\Sph_p^-$. This formula is a special case of the
expression for the winding number of $\Gamma$ about a
point, cf.
\cite[(3.20)]{hoisington2021}; here the base point lies
on $\Gamma$. Since $dP_p$ at $q$ is $r^{-1}$ times the orthogonal
projection $T_q\Gamma\to\theta^\perp$, whose determinant
is $\langle\nu_q,\theta\rangle=v$, $\textup{Jac}(P_p)=v/r^{n-1}$ on $\Gamma\setminus\{p\}$.
Hence, by the area formula applied to
$u^{n-2}$, which depends on $q$ only through $\theta$,
$$
\int_\Gamma u^{n-2}\frac{v}{r^{n-1}}
=\int_{\Sph_p}\langle\theta,-\nu_p\rangle^{n-2}
\sum_{q\in P_p^{-1}(\theta)}\sgn v(p,q)
=\int_{\Sph_p^-}\langle\theta,-\nu_p\rangle^{n-2}
=\frac{|\Sph^{n-1}|}{2^{\,n-1}},
$$
where the integral over $\Gamma$ is with respect to $q$,
with $p$ fixed; the last equality follows by evaluating
the left side on the unit sphere, on which $u=v=r/2$.
Integrating over $p\in\Gamma$ gives
$\int_{\Gamma\times\Gamma} u^{n-2}v/r^{n-1}=2^{1-n}|\Sph^{n-1}|\,|\Gamma|$. Switching $p$ and
$q$ interchanges $u$ and $v$ and leaves $r$ unchanged. Adding the two identities,
\begin{equation}\label{eq:flat-degree}
\int_{\Gamma\times\Gamma}\frac{uv(u^{n-3}+v^{n-3})}{r^{n-1}}
=2^{2-n}|\Sph^{n-1}|\,|\Gamma|.
\end{equation}
This formula, which we call \emph{the degree identity},
is the analogue of the divergence formula
$\int_\Gamma\langle q-p,\nu_q\rangle=n|\Omega|$, integrated
over $p$: for fixed $p$ both sides compute the flux
through $\Gamma$ of a radial vector field about $p$, here
$X(x)\coloneqq\langle\theta,-\nu_p\rangle^{n-2}\,\theta/r^{n-1}$
with $\theta=(x-p)/|x-p|$ and $r=|x-p|$. The factor
$r^{1-n}$ makes $X$ divergence-free, so its flux is a
universal constant rather than $n|\Omega|$.

\subsection{Ansatz}\label{sec:flat-remainder}
Since the integrals in \eqref{eq:flat-green-mass} and
\eqref{eq:flat-degree} are constant multiples of $|\Gamma|$,
for any constants $a>0$ and $b$, one quickly sees that
\begin{equation}\label{eq:flat-strategy}
\int_{\Gamma\times\Gamma}\Big(a+\tilde{\Delta}_\Gamma\psi_q+\tilde{\Delta}_\Gamma\psi_p+b\,\frac{uv(u^{n-3}+v^{n-3})}{r^{n-1}}\Big)
=a|\Gamma|\big(|\Gamma|-L\big),
\end{equation}
where
$$
L\coloneqq\frac{1}{a}\Big(2C_\psi|\Sph^{n-2}|-2^{2-n}b\,|\Sph^{n-1}|\Big),
$$
with $C_\psi$ as in \eqref{eq:psi-pole}.
If the integrand is nonnegative, then $|\Gamma|\geq L$.
Applying this to $\Gamma=\Sph^{n-1}$, which has $H=n-1$, gives
$L\leq|\Sph^{n-1}|$, with equality exactly when the integrand vanishes
on $\Sph^{n-1}$. 

So we seek $\psi$, $a$, and $b$ for which
the integrand in \eqref{eq:flat-strategy} is nonnegative and vanishes on $\Sph^{n-1}$; the
resulting bound $|\Gamma|\geq|\Sph^{n-1}|$ is then sharp. 
On $\Sph^{n-1}$, $u=v=r/2$, so by
\eqref{eq:flat-Ap-general} the two Minkowski--Green terms are
equal, and the integrand becomes
$a+2\tilde{\Delta}_{\Sph^{n-1}}\psi_q+2^{2-n}b$.
Its vanishing thus means
$\tilde{\Delta}_{\Sph^{n-1}}\psi_q
\equiv-\tfrac12\big(a+2^{2-n}b\big)$, a constant
on $\Sph^{n-1}\setminus\{q\}$.   With the pole conditions \eqref{eq:psi-pole}, this
determines $\psi$ up to an additive constant: by \eqref{eq:flat-distributional}
$$
\Delta_{\Sph^{n-1}}\psi_q
=C_\psi|\Sph^{n-2}|
\Big(\delta_q-\frac{dA}{|\Sph^{n-1}|}\Big),
$$
i.e., $\psi_q$ is a \emph{Green function} of the
sphere: its Laplacian is a point mass at the pole,
corrected by the constant needed to ensure that
$
\int_{\Sph^{n-1}}\Delta_{\Sph^{n-1}}\psi_q=0.
$
It then remains to find $a$ and $b$.

\subsection{Calibration}\label{sec:flat-five}
In dimension $n=5$, our main concern, the Green
functions $\psi_q$ of the sphere are generated, up to an additive
constant, by
$$
\psi(r)\coloneqq2\log r-\frac{2}{r^2},
$$
normalized so that $C_\psi=4$ in \eqref{eq:psi-pole}:
a direct check via \eqref{eq:flat-Ap-general} with
$u=v=r/2$ gives $\Delta_{\Sph^4}\psi_q\equiv-3$ on
$\Sph^4\setminus\{q\}$, and any two Green functions
with the same $C_\psi$ differ by a constant, since
their difference is harmonic on $\Sph^4$. Next we
determine the constants $a$ and $b$. First,
\eqref{eq:flat-Ap-general} becomes
\begin{equation}\label{eq:flat-Ap}
\tilde{\Delta}_\Gamma\psi_q=\frac{4r^2+4(r^2+4)u^2-8r(r^2+2)u}{r^4}.
\end{equation}
We set
$
\sigma\coloneqq u+v,
$
$
\tau\coloneqq u-v,
$
so that $u^2+v^2=(\sigma^2+\tau^2)/2$ and $uv=(\sigma^2-\tau^2)/4$. Multiplying by
$r^4$ and substituting \eqref{eq:flat-Ap} at both endpoints, we obtain
\begin{multline*}
r^4\Big(a+\tilde{\Delta}_\Gamma\psi_q+\tilde{\Delta}_\Gamma\psi_p+b\,\frac{uv(u^2+v^2)}{r^4}\Big)\\
=\Big(\tfrac{b}{8}\sigma^4+2(r^2+4)\sigma^2-8r(r^2+2)\sigma+ar^4+8r^2\Big)
+\Big(2(r^2+4)\tau^2-\tfrac{b}{8}\tau^4\Big).
\end{multline*}
On $\Sph^4$, $u=v=r/2$, so $\sigma=r$, $\tau=0$, and the
second bracket vanishes. Nonnegativity and vanishing of
the integrand thus require the first bracket to vanish
to second order at $\sigma=r$: its derivative there is
$(b/2-4)r^3$, forcing $b=8$, and then its value is
$(a-5)r^4$, forcing $a=5$. 

\subsection{The CMC inequality}\label{sec:F}

With the values $a=5$, $b=8$ derived above, the last displayed equation becomes
\begin{equation}\label{eq:flat-F}
5+\tilde{\Delta}_\Gamma\psi_q+\tilde{\Delta}_\Gamma\psi_p+8\frac{uv(u^2+v^2)}{r^4}=\frac{F}{r^4},
\end{equation}
where
\begin{equation}\label{eq:F-definition}
F\coloneqq(r-\sigma)^2\big(5r^2+2r\sigma+\sigma^2+8\big)+\tau^2\big(2r^2+8-\tau^2\big).
\end{equation}
The two terms of $F$ have different origins. Since $|u|=\sin\vartheta_1$ and $|v|=\sin\vartheta_2$, where $\vartheta_i$ are
the angles between the chord and the tangent planes at its endpoints,
the term $\tau^2=(u-v)^2$ measures the mismatch of the signed normal
components at the endpoints; in particular, $\tau=0$ implies
$\vartheta_1=\vartheta_2$. It plays the role of the endpoint-angle defect
$2\sin^2((\vartheta_1-\vartheta_2)/2)$ in Banchoff--Pohl \cite[(4.9)]{BP}. The term $(r-\sigma)^2$ is new, and encodes $H$.
Since $|u|,|v|\leq1$, we have $|\tau|\leq2$. Moreover,
$5r^2+2r\sigma+\sigma^2+8=4r^2+(r+\sigma)^2+8>0$ and $2r^2+8-\tau^2\geq4$. Hence
$F\geq0$, with equality exactly when $u=v=r/2$.

Near the diagonal, i.e., the set $\{(p,p)\}\subset\Gamma\times\Gamma$, we have $u,v=O(r)$, so all the functions in \eqref{eq:flat-F} are
locally integrable. We may therefore integrate
\eqref{eq:flat-F} over $\Gamma\times\Gamma$. By \eqref{eq:flat-strategy}, and using $|\Sph^3|=3|\Sph^4|/4$,
\begin{equation}\label{eq:CMC-inequality}
|\Gamma|-|\Sph^4|
=\frac{1}{5|\Gamma|}\int_{\Gamma\times\Gamma}\frac{F}{r^4}
\geq0.
\end{equation}
Thus $|\Gamma|\geq|\Sph^4|$, as desired. 

\subsection{Rigidity}
If equality holds in \eqref{eq:CMC}, then by Alexandrov's
theorem each component of $\Gamma$ is a sphere, of radius $1$
since $H=4$, and $|\Gamma|=|\Sph^4|$ leaves exactly one
component. Thus $\Gamma$ bounds a unit
ball, which completes the proof of Theorem \ref{thm:CMC} for $\R^5$.

\begin{note}\label{note:dimensions}
Since the integrand of \eqref{eq:flat-strategy} is
nonnegative and vanishes on the unit-sphere chords, its
first derivatives vanish there as well. On these
chords $\tilde{\Delta}_{\Sph^{n-1}}\psi_q$ is constant,
which by \eqref{eq:flat-Ap-general} with $u=r/2$ reads
$$
\left(1-\frac{r^2}{4}\right)\psi''
+\left(\frac{n-2}{r}-\frac{2n-3}{4}r\right)\psi'
=\text{const},
$$
while differentiating the integrand in $u$ at $u=v=r/2$
gives
$
r\big(r\psi''+(n-2)\psi'\big)=\text{const}.
$
For $n\neq3$, the second relation yields
$\psi'=\alpha r^{2-n}+\beta/r$. Substitution into the
first leaves the nonconstant terms
$
\beta(n-3)/r^2-(n-1)\alpha r^{3-n}/4.
$
Since the singularity of $\psi$ at $r=0$ requires
$\alpha\neq0$, these powers can cancel only for $n=5$.
For $n=3$ we have
$\psi(r)=2\log r$. Thus the scheme above is confined to
dimensions $3$ and $5$.
\end{note}

\section{Transition to Cartan--Hadamard Manifolds}\label{sec:two-point}

Here we generalize the Euclidean Minkowski--Green and degree identities to the nonpositive curvature setting, via Jacobi fields. The
notation and terminology of Section~\ref{sec:euclidean} naturally extend to Cartan--Hadamard manifolds $M$ since there exists a unique geodesic between every pair of points of $M$, and hence chords of $\Gamma$ are well-defined.

\subsection{Jacobi fields}
The objects entering the Euclidean identities
are the derivatives of the distance at the endpoints of the chords, and
the Jacobian of the exponential map along the chords. Here we express these through the solution of the  Jacobi equation.

Fix distinct points $p,q\in\Gamma$. Let
$\gamma:[0,r]\to M$ be the unit speed geodesic from $p$ to $q$, and note that the distance function $\dist\colon M\times M\to \R$ is smooth
off the diagonal, since $M$ is Cartan--Hadamard. Set
$$
r(p,q)\coloneqq \dist(p,q),\qquad
r_q(\cdot)\coloneqq\dist(\cdot,q),\qquad
r_p(\cdot)\coloneqq\dist(p,\cdot),
$$
so that $\psi_q=\psi\circ r_q$,  $\psi_p=\psi\circ r_p$, and
$
u\coloneqq\langle\nabla r_q(p),\nu_p\rangle,
$
$
v\coloneqq\langle\nabla r_p(q),\nu_q\rangle.
$
The first-variation formula gives
$\nabla r_q(p)=-\gamma'(0)$ and $\nabla r_p(q)=\gamma'(r)$,
so
\begin{equation}\label{eq:u-v}
u=-\langle\gamma'(0),\nu_p\rangle,
\qquad v=\langle\gamma'(r),\nu_q\rangle,
\qquad -1\leq u,v\leq1.
\end{equation}
The tangential gradients are
$
\nabla_\Gamma r_q=\nabla r_q-u\nu_p,
$
$
\nabla_\Gamma r_p=\nabla r_p-v\nu_q,
$
and hence
\begin{equation}\label{eq:tangential-gradients}
|\nabla_\Gamma r_q|^2=1-u^2,
\qquad
|\nabla_\Gamma r_p|^2=1-v^2.
\end{equation}

Choose an orthonormal frame $E_1(t),\dots,E_{n-1}(t)$
perpendicular to $\gamma'(t)$ and parallel along
$\gamma$. We use this frame to identify each normal space
$\gamma'(t)^\perp$ with $\R^{n-1}$. Let $R$ denote the Riemann curvature tensor of $M$ and set
$$
K_{ij}(t)
\coloneqq\Big\langle R\big(E_j(t),\gamma'(t)\big)\gamma'(t),\,E_i(t)\Big\rangle,
$$
an $(n-1)\times(n-1)$ matrix, which is symmetric and
nonpositive: for $X=\sum_i\xi^iE_i(t)$,
$
\xi^TK(t)\xi=\langle R(X,\gamma')\gamma',X\rangle
=\operatorname{sec}_M(X\wedge\gamma')|X|^2\leq0.
$
For symmetric matrices, we write $A\geq B$ if $A-B$ is
positive semidefinite; thus $K\leq0$.

Let $Z$ be a vector field along $\gamma$ with
$Z(t)\perp\gamma'(t)$ for all $t$, regarded as a
function $Z:[0,r]\to\R^{n-1}$ via the frame $E_i$. Then $Z$
is a \emph{Jacobi field}, i.e. $Z''+R(Z,\gamma')\gamma'=0$,
if and only if
$
Z''+KZ=0.
$
Hence the Jacobi fields normal to $\gamma$ and
vanishing at $p$ are exactly $Z(t)=A(t)Z'(0)$, where
\begin{equation}\label{eq:angular-jacobi}
A''(t)+K(t)A(t)=0,\qquad A(0)=0,\qquad A'(0)=I_{n-1}.
\end{equation}

For $\xi\in T_\theta\Sph_p=\theta^\perp$, where $\Sph_p$ now denotes the unit sphere in $T_pM$, the angular variation of $\exp_p(t\theta)$ is the Jacobi field
$
Z_\xi(0)=0,
$
$
Z_\xi'(0)=\xi.
$
In the parallel frame, $Z_\xi(t)=A(t)\xi$. Therefore, in geodesic polar coordinates about $p$, the volume element
of $M$ is
$
dV=\det A(t)\,dt\,d\theta.
$
In view of this formula we define the \emph{polar Jacobian}
$$
J(p,q)\coloneqq\det A\big(r(p,q)\big),
$$
so that $J(p,q)$ is the Jacobian of the map
$\theta\mapsto\exp_p(r\theta)$ from $\Sph_p$ onto
the geodesic sphere of radius $r=r(p,q)$ about $p$,
evaluated at $\theta=\gamma'(0)$; its Euclidean value is
$r^{n-1}$.
Since $M$ has nonpositive curvature, Rauch's comparison theorem yields
\begin{equation}\label{eq:B-lower-bound}
|A(t)\xi|\geq t|\xi|,
\qquad
J\geq r^{n-1}.
\end{equation}

Define the symmetric bilinear forms on $T_pM$ and $T_qM$ by
\begin{equation}\label{eq:hessian-remainders}
\cE_p\coloneqq\nabla^2r_q(p)-\frac{g-dr_q\otimes dr_q}{r},
\qquad
\cE_q\coloneqq\nabla^2r_p(q)-\frac{g-dr_p\otimes dr_p}{r},
\end{equation}
where $g$ is the metric of $M$. In $\R^n$, $\nabla^2r_q=\frac1r(g-dr_q\otimes dr_q)$, so
$\cE_p=\cE_q=0$.
Both terms
of $\cE_p$ vanish on $\nabla r_q(p)=-\gamma'(0)$, so
$\cE_p$ is determined by its restriction to
$\gamma'(0)^\perp$, whose matrix in the frame $E_i(0)$ we
denote by $\bcE_p$: the restriction of $\nabla^2r_q(p)$
to $\gamma'(0)^\perp$ minus $I_{n-1}/r$. Similarly
$\bcE_q$ is the matrix of $\cE_q$ on $\gamma'(r)^\perp$.
By Hessian comparison, $\cE_p$, $\cE_q\geq0$.

\subsection{Estimates near the diagonal}
The next lemma records the rates at which the chord data $u$,
$v$, $\cE_p$, $\cE_q$, $J/r^{n-1}$ approach their limits as
$q\to p$. These rates make singular integrands in this paper
locally integrable, and hold at the $\C^{1,1}$ regularity required
in Section~\ref{sec:obstacle}. Let $B_\rho(p)$ denote the geodesic ball in $M$ of radius $\rho$ centered at a point $p$; all balls in this paper are closed.

\begin{lemma}\label{lem:diagonal-estimates}
Suppose $\Gamma$ is $\C^{1,1}$.
There are $r_0,C>0$, depending only on $\Gamma$ and on the ambient geometry near it, such that for $0<r=\dist(p,q)<r_0$, with $\|\cdot\|$ the operator norm,
$$
|u|+|v|\leq Cr,
\qquad \|\cE_p\|+\|\cE_q\|\leq Cr,
\qquad |J/r^{n-1}-1|\leq Cr^2,
$$
and $|\Gamma\cap B_\rho(p)|\leq C\rho^{n-1}$ for $0<\rho<r_0$.
\end{lemma}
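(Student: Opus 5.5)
Everything will be done locally, by comparing the data to their Euclidean counterparts in normal coordinates. Since $\Gamma$ is compact and $\C^{1,1}$, there is a uniform $r_1>0$ and a uniform Lipschitz bound $\Lambda$ for $\nu$ with the following property. For every $p\in\Gamma$, the piece $\Gamma\cap B_{r_1}(p)$, expressed in normal coordinates $\exp_p^{-1}$, is the graph over $T_p\Gamma$ of a $\C^{1,1}$ function $f$ with $f(0)=0$, $\nabla f(0)=0$ and $|\nabla f(x)|\le \Lambda|x|$, so that $|f(x)|\le \Lambda|x|^2/2$. The ambient metric has $g_{ij}=\delta_{ij}+O(|x|^2)$, with constants depending only on curvature bounds on a compact neighborhood of $\Gamma$.

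\emph{Area bound.} The graph map has Jacobian bounded by $(1+\Lambda^2r_1^2)^{1/2}(1+O(r_1^2))$. The portion inside $B_\rho(p)$ projects into the $(n-1)$-ball of radius $\rho$ in $T_p\Gamma$. This gives $|\Gamma\cap B_\rho(p)|\le C\rho^{n-1}$.

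\emph{Bound on $u$.} Write $q=\exp_p(r\theta)$, so that $\gamma'(0)=\theta$ and $u=-\langle\theta,\nu_p\rangle$. In the graph chart, $r\theta=(x,f(x))$, so $|u|=|f(x)|/r\le \Lambda |x|^2/(2r)\le Cr$. The symmetric argument with $p$ and $q$ interchanged, using the chart centered at $q$, gives $|v|\le Cr$. This uses only the uniform $\C^{1,1}$ constants and the fact that $\gamma'(r)=-\,$(initial direction of the reversed geodesic).

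\emph{Hessian and Jacobian bounds.} These do not involve $\Gamma$ at all. They are standard short-distance expansions in a neighborhood $U$ of $\Gamma$ on which $|R|$ and $|\nabla R|$ are bounded. From \eqref{eq:angular-jacobi}, since $A(t)=tI+O(t^3)$ with $\|A''\|\le \|K\|\|A\|\le Ct$, we get $A(t)=tI-\tfrac{t^3}{6}K(0)+O(t^4)$, so $\det A(r)/r^{n-1}=1+O(r^2)$ uniformly. The restricted Hessian of $r_q$ at $p$ equals $A'(r)A(r)^{-1}$ for the Jacobi matrix solution along the reversed geodesic. Hence $\bcE_p=A'A^{-1}-I/r$. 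Expanding gives $A'=I-\tfrac{r^2}{2}K+O(r^3)$ and $A^{-1}=r^{-1}(I+\tfrac{r^2}{6}K+O(r^3))$, so $\bcE_p=-\tfrac{r}{3}K+O(r^2)=O(r)$. Since $\cE_p$ vanishes on $\gamma'$, the operator norm of $\cE_p$ equals $\|\bcE_p\|$, and the same holds for $\cE_q$. Taking $r_0<r_1$ below the radius on which the geodesics stay in $U$ makes all constants uniform.

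\emph{Main obstacle.} The main point to check is uniformity in $p$. The Riccati/Jacobi expansion needs $A$ to be invertible with the remainder controlled by $\sup_U|R|$ alone. I would secure this with Rauch's bound \eqref{eq:B-lower-bound}, which gives $\|A^{-1}\|\le 1/t$, together with a Gronwall estimate on $A-tI$. This avoids relying on higher derivatives of $R$.
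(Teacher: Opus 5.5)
Your proposal is correct and follows essentially the same route as the paper: a uniform $\C^{1,1}$ graph chart in normal coordinates for the area and $u$ bounds, and the Jacobi-matrix expansion $A(t)=tI+O(t^3)$, $A'(t)=I+O(t^2)$ (from $\|K\|\le C$) for $\bcE_p$, $\bcE_q$ and $J/r^{n-1}$, with the reversed chord handling the other endpoint. There are two minor differences. You get the bound on $v$ from $v(p,q)=u(q,p)$ and the chart at $q$, which is slightly cleaner than the paper's comparison of $\nu_q$ with $\nu_p$. Your second-order expansion $\bcE_p=-\tfrac r3K+O(r^2)$ uses a bound on $\nabla R$ that is not needed, since the first-order expansion already gives $O(r)$.
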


\begin{proof}
Since $\Gamma$ is $\C^{1,1}$ and compact, there is $r_0>0$
such that, in normal coordinates centered at any
$p\in\Gamma$ with $T_p\Gamma=\R^{n-1}\times\{0\}$ and
$\nu_p=e_n$, $\Gamma\cap B_{r_0}(p)$ is contained in the
graph of a $\C^{1,1}$ function
$h\colon\{|z|<r_0\}\subset\R^{n-1}\to\R$,
$$
q=(z,h(z)),
\qquad
h(0)=0,\qquad \nabla h(0)=0,
\qquad |h(z)|\leq C|z|^2,
\qquad |\nabla h(z)|\leq C|z|,
$$
where $C$ is independent of $p$.
For such $q$, the chord $\gamma$ from $p$ to $q$ is
the segment $\gamma(t)=tq/r$, $0\leq t\leq r$, where
$r=|q|$; so $\gamma'\equiv q/r$, and
$\langle\gamma'(0),\nu_p\rangle=h(z)/r=O(r)$. Since
$\Gamma$ is $\C^{1,1}$, $\nu$ is Lipschitz, so
$|\nu_q-\nu_p|\leq Cr$; and the metric coefficients at
$q$ are $\delta_{ij}+O(r^2)$, so
$\langle\gamma'(r),\nu_q\rangle=\langle\gamma'(0),\nu_p\rangle+O(r)$.
By \eqref{eq:u-v}, therefore,
$
|u|+|v|\leq Cr.
$
Since $\Gamma$ is compact, $\|K(t)\|\leq C$ along every
chord with $r<r_0$. By \eqref{eq:angular-jacobi},
$$
A(t)=tI_{n-1}-\int_0^t(t-s)K(s)A(s)\,ds,
$$
which gives
$
A(t)=tI_{n-1}+O(t^3),
$
$
A'(t)=I_{n-1}+O(t^2).
$
The restriction of $\nabla^2r_p$ at $\gamma(t)$ to
$\gamma'(t)^\perp$ is $A'(t)A(t)^{-1}$ in the frame
$E_i(t)$; at $t=r$ this gives
$$
\bcE_q=A'(r)A(r)^{-1}-r^{-1}I_{n-1}=O(r),
\qquad
\frac{J}{r^{n-1}}=\frac{\det A(r)}{r^{n-1}}=1+O(r^2).
$$
The same calculation along the reversed chord, from $q$ to $p$, gives the bound on $\cE_p$. Finally, $\Gamma\cap B_\rho(p)$ 
lies in the graph of $h$ over $\{|z|\leq\rho\}$, whose area is at most $C\rho^{n-1}$
since $|\nabla h|\leq C\rho$ there; hence
$|\Gamma\cap B_\rho(p)|\leq C\rho^{n-1}$.
\end{proof}

By Lemma~\ref{lem:diagonal-estimates}, the singular
quantities used below satisfy
$$
\frac{|v|}{J}=O(r^{2-n}),
\qquad
\frac{|uv|}{J}=O(r^{3-n}),
\qquad
\frac{|uv|\big(|u|^{n-3}+|v|^{n-3}\big)}{J}=O(1).
$$
Furthermore, since $|\Gamma\cap B_\rho(p)|\leq C\rho^{n-1}$,
$\int_{\Gamma\cap B_\eps(p)}r^{-\alpha}=O(\eps^{\,n-1-\alpha})$
for $0\leq\alpha<n-1$. Hence all these expressions are
integrable on $\Gamma$, and the integrals over
$\Gamma\times\Gamma$ below in which they appear are
absolutely convergent.

\subsection{The general Minkowski--Green identity}\label{sec:green}

In this subsection $\psi$ denotes any function satisfying
\eqref{eq:psi-pole}. We show that the Minkowski--Green identity
\eqref{eq:flat-green-mass} carries over to Cartan--Hadamard manifolds
verbatim.
Recall that by  \eqref{eq:flat-Ap-general} and \eqref{eq:laplacian-convention},
$$
\tilde{\Delta}_\Gamma\psi_q
=\tr_\Gamma\nabla^2\psi_q-H\langle\nabla\psi_q,\nu\rangle.
$$
We first compute $\tilde{\Delta}_\Gamma\psi_q$ explicitly. From \eqref{eq:hessian-remainders},
$
\nabla^2 r_q(p)=\frac1r(g-dr_q\otimes dr_q)+\cE_p.
$
Taking the trace over an orthonormal basis of $T_p\Gamma$ and using \eqref{eq:tangential-gradients}, we find
$$
\tr_\Gamma\nabla^2 r_q
=\frac1r\big(n-1-|\nabla_\Gamma r_q|^2\big)+\tr_\Gamma\cE_p
=\frac{n-2+u^2}{r}+\tr_\Gamma\cE_p.
$$
Since $\langle\nabla r_q,\nu_p\rangle=u$, the chain rule gives
\begin{equation}\label{eq:curved-Ap-general}
\tilde{\Delta}_\Gamma\psi_q
=\psi''(r)(1-u^2)
+\psi'(r)\Big(\frac{n-2+u^2}{r}+\tr_\Gamma\cE_p-Hu\Big),
\end{equation}
which, when $H=n-1$, differs from its Euclidean counterpart
\eqref{eq:flat-Ap-general} only by the trace term.
By Lemma~\ref{lem:diagonal-estimates}, $u=O(r)$ and
$\tr_\Gamma\cE_p=O(r)$, and $H$ is bounded; since the
terms of order $r^{1-n}$ in $\psi''+(n-2)\psi'/r$ cancel
by \eqref{eq:psi-pole}, it follows that
\begin{equation}\label{eq:Ap-bound}
|\tilde{\Delta}_\Gamma\psi_q|\leq Cr^{2-n}.
\end{equation}
Since $|\Gamma\cap B_\eps(q)|\leq C\eps^{n-1}$ by
Lemma~\ref{lem:diagonal-estimates}, \eqref{eq:Ap-bound}
implies
$\tilde{\Delta}_\Gamma\psi_q\in L^1(\Gamma)$.

\begin{proposition}[General Minkowski--Green identity]\label{lem:green}
Let $\psi$ satisfy \eqref{eq:psi-pole}. For every
fixed $q\in\Gamma$,
\begin{equation}\label{eq:green-mass}
\int_\Gamma \tilde{\Delta}_\Gamma\psi_q=-C_\psi|\Sph^{n-2}|.
\end{equation}
\end{proposition}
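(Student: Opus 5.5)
The plan is to apply the divergence theorem on $\Gamma$ with a small geodesic ball around $q$ removed, and then let the radius shrink. Fix $q\in\Gamma$ and $\eps\in(0,r_0)$, with $r_0$ as in Lemma~\ref{lem:diagonal-estimates}. Set $\Gamma_\eps\coloneqq\Gamma\setminus B_\eps(q)$.

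Since $\dist$ is smooth off the diagonal, $\psi_q$ is smooth on $\Gamma\setminus\{q\}$. For almost every $\eps$, and in fact for every small $\eps$ (see below), the level set $\Sigma_\eps\coloneqq\{r_q=\eps\}\cap\Gamma$ is a smooth closed hypersurface of $\Gamma$. On $\Gamma_\eps$ we have $\Delta_\Gamma\psi_q=\tilde\Delta_\Gamma\psi_q$, so the divergence theorem gives
$$
\int_{\Gamma_\eps}\tilde\Delta_\Gamma\psi_q=\int_{\Sigma_\eps}\langle\nabla_\Gamma\psi_q,\eta\rangle,
$$
where $\eta$ is the outward conormal of $\Gamma_\eps$, i.e.\ the conormal pointing toward $q$. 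On $\Sigma_\eps$ we have $\eta=-\nabla_\Gamma r_q/|\nabla_\Gamma r_q|$. This is well defined because, by \eqref{eq:tangential-gradients} and Lemma~\ref{lem:diagonal-estimates}, $|\nabla_\Gamma r_q|^2=1-u^2\ge 1-C^2\eps^2>0$. The same bound shows that $\eps$ is a regular value of $r_q|_\Gamma$ for all small $\eps$. Hence the boundary term equals
$$
-\int_{\Sigma_\eps}\psi'(\eps)\sqrt{1-u^2}.
$$

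The left side tends to $\int_\Gamma\tilde\Delta_\Gamma\psi_q$ by dominated convergence, since \eqref{eq:Ap-bound} gives $\tilde\Delta_\Gamma\psi_q\in L^1(\Gamma)$. For the right side, \eqref{eq:psi-pole} gives $\psi'(\eps)=C_\psi\eps^{2-n}+O(\eps^{3-n})$, and $\sqrt{1-u^2}=1+O(\eps^2)$. So it suffices to show
$$
|\Sigma_\eps|=|\Sph^{n-2}|\eps^{n-2}\bigl(1+o(1)\bigr).
$$
Then the boundary term tends to $-C_\psi|\Sph^{n-2}|$, which is the claim.

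The one genuinely non-Euclidean step is this area asymptotic. I would prove it in normal coordinates at $q$, as in the proof of Lemma~\ref{lem:diagonal-estimates}. There $\Gamma$ is locally the graph $(z,h(z))$ with $|h|\le C|z|^2$ and $|\nabla h|\le C|z|$, and $r_q$ is the Euclidean norm $|(z,h(z))|$, because geodesics from $q$ are straight rays. The radial function $\rho(z)=|(z,h(z))|=|z|(1+O(|z|^2))$ has derivative along rays equal to $1+O(|z|)$. Hence $\Sigma_\eps$ is the graph over a radial graph $\{z=s(\omega)\omega:\omega\in\Sph^{n-2}\}$ with $s(\omega)=\eps(1+O(\eps^2))$. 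Its induced area differs from the Euclidean area of $\eps\Sph^{n-2}$ by a factor $1+O(\eps)$, since the metric in normal coordinates is $\delta+O(\eps^2)$ and the graph slopes are $O(\eps)$.

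Alternatively, one could bypass the level-set geometry with the coarea formula. Integrate the flux identity over $\eps\in(\eps_1,2\eps_1)$ and use $|\Gamma\cap B_\rho(q)|=|\B^{n-1}|\rho^{n-1}(1+O(\rho))$. That route avoids needing a control of $\Sigma_\eps$ for individual $\eps$. I would try the direct graph computation first, since all the needed estimates are already in Lemma~\ref{lem:diagonal-estimates}.
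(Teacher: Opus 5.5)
Your proposal is correct and follows essentially the same route as the paper. Both excise $B_\eps(q)$, apply the divergence theorem with outward conormal $-\nabla_\Gamma r_q/|\nabla_\Gamma r_q|$, use $|\nabla_\Gamma r_q|^2=1-u^2=1+O(\eps^2)$ and the normal-coordinate graph estimate $|\partial U_\eps|=|\Sph^{n-2}|\eps^{n-2}(1+o(1))$ to show the flux tends to $-C_\psi|\Sph^{n-2}|$, and pass to the limit using $\tilde\Delta_\Gamma\psi_q\in L^1(\Gamma)$. Your $1+O(\eps)$ area factor is weaker than the paper's $1+O(\eps^2)$ but suffices, and the coarea alternative is not needed.
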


\begin{proof}
For small $\eps>0$, $U_\eps\coloneqq\Gamma\setminus B_\eps(q)$
has $\C^1$ boundary $\partial U_\eps$, with outward conormal
$\eta_\eps\coloneqq-\nabla_\Gamma r_q/|\nabla_\Gamma r_q|$.
By \eqref{eq:psi-pole},
$$
\partial_{\eta_\eps}\psi_q
=-\psi'(\eps)|\nabla_\Gamma r_q|
=-\big(C_\psi\,\eps^{2-n}+O(\eps^{3-n})\big)|\nabla_\Gamma r_q|.
$$
Moreover $|\nabla_\Gamma r_q|=1+O(\eps^2)$, and the graph
representation of $\Gamma$ near $q$ in the proof of
Lemma~\ref{lem:diagonal-estimates} gives
$
|\partial U_\eps|=|\Sph^{n-2}|\eps^{n-2}\big(1+O(\eps^2)\big).
$
Consequently,
\begin{equation}\label{eq:green-flux}
\int_{\partial U_\eps}\partial_{\eta_\eps}\psi_q
=-C_\psi|\Sph^{n-2}|+o(1).
\end{equation}
By the divergence theorem on $U_\eps$,
$
\int_{U_\eps}\tilde{\Delta}_\Gamma\psi_q
=\int_{\partial U_\eps}\partial_{\eta_\eps}\psi_q.
$
Letting $\eps\to0$, using
$\tilde{\Delta}_\Gamma\psi_q\in L^1(\Gamma)$, yields
\eqref{eq:green-mass}.
\end{proof}

\subsection{The general degree identity}\label{sec:degree}

Now we generalize the degree identity of
Section~\ref{sec:flat-degree}. It has the same mass as its Euclidean counterpart
\eqref{eq:flat-degree}, but $r^{n-1}$ is
replaced by the polar Jacobian $J\geq r^{n-1}$. Fix
$p\in\Gamma$, and let
$$
P_p(q)\coloneqq\frac{\exp_p^{-1}(q)}{\dist(p,q)}\in\Sph_p
$$
be radial projection from $p$, as in
Section~\ref{sec:flat-degree}, so that
$P_p(q)=\gamma'(0)$ and $u=\langle P_p(q),-\nu_p\rangle$, with
$\Sph_p^-$ defined as before. The
crossing-number formula \eqref{eq:crossing-number}
holds verbatim for the geodesic rays $\exp_p(t\theta)$
(for almost every $\theta$ the fiber $P_p^{-1}(\theta)$ is
finite and meets $\Gamma$ transversely, by the area formula
applied to $\Gamma$ and to $\{\textup{Jac}(P_p)=0\}$).
Since $dP_p$ at $q$ is $A(r)^{-1}$ composed with the
orthogonal projection $T_q\Gamma\to\gamma'(r)^\perp$,
whose determinant is $\langle\nu_q,\gamma'(r)\rangle=v$,
$\textup{Jac}(P_p)=v/J$ on $\Gamma\setminus\{p\}$;
cf.\ \cite[Cor.~2.13]{hoisington2021}. Hence, by the
area formula applied to $u^{n-2}$,
\begin{equation}\label{eq:test-degree}
\int_\Gamma u^{n-2}\frac{v}{J}
=\int_{\Sph_p}\langle\theta,-\nu_p\rangle^{n-2}
\sum_{q\in P_p^{-1}(\theta)}\sgn v(p,q)
=\int_{\Sph_p^-}\langle\theta,-\nu_p\rangle^{n-2}.
\end{equation}
At $\C^{1,1}$ regularity, we apply the area formula on
$\Gamma\setminus B_\eps(p)$, where $P_p$ is Lipschitz,
and let $\eps\to0$ using the integrability of $|v|/J$
(Lemma~\ref{lem:diagonal-estimates}).

\begin{proposition}[General degree identity]\label{prop:boundary-integral}
We have
\begin{equation}\label{eq:boundary-integral}
\int_{\Gamma\times\Gamma}\frac{uv(u^{n-3}+v^{n-3})}{J}
=2^{2-n}|\Sph^{n-1}|\,|\Gamma|.
\end{equation}
\end{proposition}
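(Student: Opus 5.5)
The plan is to follow the Euclidean argument of Section~\ref{sec:flat-degree}, with \eqref{eq:test-degree} as the starting point. The right-hand side of \eqref{eq:test-degree} is an integral over the unit hemisphere $\Sph_p^-\subset T_pM$. It depends only on the Euclidean structure of $T_pM$, so it equals the Euclidean value $2^{1-n}|\Sph^{n-1}|$ computed earlier. (Alternatively, one can evaluate it directly in polar coordinates about $-\nu_p$: $|\Sph^{n-2}|\int_0^{\pi/2}\cos^{n-2}\phi\sin^{n-2}\phi\,d\phi$.) Hence, for every fixed $p\in\Gamma$,
\[
\int_\Gamma u^{n-2}\frac{v}{J}\,dA(q)=2^{1-n}|\Sph^{n-1}|.
\]

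Next I integrate this identity over $p\in\Gamma$, which gives $\int_{\Gamma\times\Gamma}u^{n-2}v/J=2^{1-n}|\Sph^{n-1}||\Gamma|$. To justify the double integral via Fubini, I use the bound $|u^{n-2}v|/J\le |uv|/J=O(r^{3-n})$ near the diagonal from Lemma~\ref{lem:diagonal-estimates}, together with boundedness away from the diagonal. Since $J\ge r^{n-1}$, $\Gamma$ is compact and all quantities are continuous off the diagonal, the integrand is absolutely integrable on $\Gamma\times\Gamma$.

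The main point to verify is the symmetry under swapping $p$ and $q$. The chord from $q$ to $p$ is $\gamma$ reversed, so $\tilde\gamma(t)=\gamma(r-t)$. Then $u(q,p)=-\langle\tilde\gamma'(0),\nu_q\rangle=\langle\gamma'(r),\nu_q\rangle=v(p,q)$, and similarly $v(q,p)=u(p,q)$. The distance $r$ is symmetric. The subtle point is that $J(q,p)=J(p,q)$, since $J$ is defined via Jacobi fields starting at $p$. I will prove this with a Wronskian argument. Let $\tilde A$ solve $\tilde A''+K\tilde A=0$ with $\tilde A(r)=0$ and $\tilde A'(r)=-I$; this is the reversed-chord solution, because reversing the parameter preserves $K$ and flips derivatives. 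Because $K$ is symmetric, the Wronskian $W(t)=A(t)^T\tilde A'(t)-A'(t)^T\tilde A(t)$ is constant. Evaluating at $t=0$ gives $W=-\tilde A(0)$, and at $t=r$ gives $W=-A(r)^T$. Hence $\tilde A(0)=A(r)^T$, and the determinants agree: $J(q,p)=\det\tilde A(0)=\det A(r)=J(p,q)$. The change of sign of orientation in the reversed frame does not affect $|\det|$, and both determinants are positive by \eqref{eq:B-lower-bound}.

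With this symmetry, relabeling the variables in the double integral gives $\int_{\Gamma\times\Gamma}v^{n-2}u/J=2^{1-n}|\Sph^{n-1}||\Gamma|$ as well. Adding the two identities and factoring $uv$ out of $u^{n-2}v+uv^{n-2}$ yields \eqref{eq:boundary-integral}.
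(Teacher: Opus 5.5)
Your proposal is correct and follows the paper's proof. You evaluate the hemisphere integral in \eqref{eq:test-degree} as $2^{1-n}|\Sph^{n-1}|$, integrate over $p$, and symmetrize using $u(q,p)=v(p,q)$ and $J(p,q)=J(q,p)$. The only difference is that you prove the symmetry of $J$ yourself: the Wronskian $A^T\tilde A'-(A')^T\tilde A$ is constant, which gives $\tilde A(0)=A(r)^T$. The paper instead cites Yau's lemma for this symmetry.
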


\begin{proof}
By \eqref{eq:test-degree} and Section~\ref{sec:flat-degree}, $\int_\Gamma u^{n-2}v/J=2^{1-n}|\Sph^{n-1}|$.
The right side does not depend on $p$, so integrating
over $p\in\Gamma$ gives $\int_{\Gamma\times\Gamma} u^{n-2}v/J=2^{1-n}|\Sph^{n-1}|\,|\Gamma|$.
Switching $p$ and $q$ interchanges $u$
and $v$, while 
$J(p,q)=J(q,p)$ \cite[Lem.~5]{yau1975}. So the same
holds for $uv^{n-2}$. Adding the two identities proves
\eqref{eq:boundary-integral}.
\end{proof}

\section{Proof of the CMC Inequality}\label{sec:CMC}

Here we use the generalized Minkowski--Green and degree identities to extend the Euclidean CMC inequality \eqref{eq:CMC-inequality} to
Cartan--Hadamard manifolds, and so prove Theorem~\ref{thm:CMC}. Apart
from the Jacobi-field estimate of the next subsection,  we take $n=5$ throughout. By Section~\ref{sec:two-point}, the
Minkowski--Green identity carries over verbatim, while in the degree
identity the denominator $r^4$ is replaced by the Jacobian $J\geq r^4$.
The Jacobi-field estimate (Section~\ref{subsec:jacobi}) compensates for
this loss by the excess of the distance Hessians at the endpoints of the chords. Hence the integrand $F$ in \eqref{eq:CMC-inequality} extends to a nonnegative integrand
$\tilde F$ (Section~\ref{sec:G}), and  yields
the general CMC inequality (Section~\ref{sec:CMC-int}). In the equality case the Jacobi comparison is sharp along every
chord, which forces the enclosed domain $\Omega$ to be
flat, and $\Gamma$ to be a sphere (Section~\ref{sec:rigidity}).

\subsection{The Jacobi-field estimate}\label{subsec:jacobi}
The identities of Section~\ref{sec:two-point} differ
from their Euclidean forms by two curvature corrections:
the excess of $J$ over $r^{n-1}$ in the degree identity,
and the Hessian remainders $\cE_p$, $\cE_q$ in the
Minkowski--Green identity. The following estimate, valid in
every dimension, relates these quantities.

\begin{proposition}[Jacobi-field estimate]\label{lem:jacobi-endpoint}
For every distinct pair of points $p,q\in\Gamma$,
\begin{equation}
\tr_\Gamma\cE_p+\tr_\Gamma\cE_q\geq\frac{2|uv|}{r}\Big(1-\frac{r^{n-1}}{J}\Big).
\end{equation}
\end{proposition}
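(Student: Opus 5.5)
The plan is to reduce the statement to one inequality per endpoint, in terms of the full normal traces of $\cE_p,\cE_q$, and then to prove that inequality with a Riccati comparison along the chord. The idea is that $\tr_\Gamma\cE_p$ can be bounded below by $u^2$ times the full trace $\tr\bcE_p$; the AM--GM inequality then produces the factor $2|uv|$.

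\emph{Step 1 (from $\tr_\Gamma$ to full traces).} Since $\cE_p\geq0$ and $\cE_p(\gamma'(0),\cdot)=0$, the trace of $\cE_p$ over $T_pM$ equals $\tr\bcE_p$. Subtracting the normal direction gives $\tr_\Gamma\cE_p=\tr\bcE_p-\cE_p(\nu_p,\nu_p)$. The component of $\nu_p$ orthogonal to $\gamma'(0)$ has squared length $1-u^2$, so
$$
\cE_p(\nu_p,\nu_p)\leq(1-u^2)\lambda_{\max}(\bcE_p)\leq(1-u^2)\tr\bcE_p .
$$
Hence $\tr_\Gamma\cE_p\geq u^2\tr\bcE_p$, and likewise $\tr_\Gamma\cE_q\geq v^2\tr\bcE_q$. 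By AM--GM,
$$
\tr_\Gamma\cE_p+\tr_\Gamma\cE_q\geq 2|uv|\sqrt{\tr\bcE_p\,\tr\bcE_q}.
$$
It therefore suffices to show, for each endpoint separately,
$$
r\,\tr\bcE_q\geq1-\frac{r^{n-1}}{J},
$$
and the same bound for $\bcE_p$. The $p$-endpoint follows by applying the same argument to the reversed chord, using $J(p,q)=J(q,p)$ \cite[Lem.~5]{yau1975}.

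\emph{Step 2 (reduction to a scalar ODE statement).} As in the proof of Lemma~\ref{lem:diagonal-estimates}, $\bcE_q=A'(r)A(r)^{-1}-I/r$. Hence
$$
\tr\bcE_q=w(r),\qquad w(t)\coloneqq\frac{d}{dt}f(t),\qquad f(t)\coloneqq\log\frac{\det A(t)}{t^{n-1}}\geq0 ,
$$
where $f\geq0$ by \eqref{eq:B-lower-bound}. Since $1-e^{-f}\leq f$, it is enough to prove $r\,w(r)\geq f(r)=\int_0^r w$. This holds, for instance, if $w$ is nondecreasing, i.e. if $t\mapsto\log\big(\det A(t)/t^{n-1}\big)$ is convex. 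In the model spaces this is true: for curvature $-1$ one has $w=(n-1)(\coth t-1/t)$, which is increasing.

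\emph{Step 3 (main obstacle).} The hard step is proving $r\,w(r)\geq\int_0^r w$ in general. I will work with $S=A'A^{-1}$ and $W=S-I/t\geq0$. The Riccati equation $S'+S^2+K=0$ gives
$$
W'=-K-\frac{2W}{t}-W^2 .
$$
Here $-K\geq0$ helps, but the term $-W^2$ works against monotonicity of $\tr W$. The first attempt will be to compare $tw$ with $\int_0^t w$ directly: the function $h(t)=t\,w(t)-\int_0^t w$ satisfies $h(0)=0$ and
$$
h'(t)=t\,w'(t)\geq t\big(-\tr K-\tr W^2\big)-2w .
$$
I expect to need the full bound $1-r^{n-1}/J$ rather than the weaker target $f$. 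So the fallback is to use the stronger form of AM--GM, requiring only the product $\tr\bcE_p\,\tr\bcE_q\geq r^{-2}(1-r^{n-1}/J)^2$. This would exploit the symplectic (Wronskian) relation between the Jacobi tensors vanishing at $p$ and at $q$, which ties $\bcE_p$ and $\bcE_q$ to the same $J$. Concretely, I would write $\det A(r)=J$ and express $1-r^{n-1}/J$ as an integral of $w$ weighted by $e^{-f}$. The aim is then to bound that integral by the endpoint values of $w$ at the two ends of the chord.
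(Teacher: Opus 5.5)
\textbf{There is a genuine gap: your per-endpoint reduction is false, so Steps~2--3 aim at a false target.}

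Your Step~1 is exactly the paper's last step: $\tr_\Gamma\cE_p\geq u^2\tr\bcE_p$, $\tr_\Gamma\cE_q\geq v^2\tr\bcE_q$, then AM--GM. So everything rests on proving $r\sqrt{\tr\bcE_p\,\tr\bcE_q}\geq1-r^{n-1}/J$. You replace this by the per-endpoint inequality $r\,\tr\bcE_q\geq1-r^{n-1}/J$, and that inequality fails in general.

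It involves only the geodesic segment, and any segment is a chord of some closed hypersurface, so one bad segment suffices. Let $K(t)=-\kappa(t)I$ with $\kappa\geq0$ supported in $[0,\eps]$ and $\kappa\not\equiv0$. This is realized along the lines $t\mapsto(t,x_0)$ of the warped product $dt^2+\phi(t)^2g_{\R^{n-1}}$, with $\phi>0$ convex and $\phi''$ supported in $[0,\eps]$. This is a Cartan--Hadamard manifold, since its curvature eigenvalues are $-\phi''/\phi$ and $-(\phi'/\phi)^2$, both nonpositive, and along these lines $\kappa=\phi''/\phi$. Then $A=aI$ with $a$ convex, $a(0)=0$, $a'(0)=1$. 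So for $t\geq\eps$ we have $a(t)=ct+d$ with $c=a'(\eps)>1$ and $d=a(\eps)-c\eps<0$. Hence, as $r\to\infty$,
\[
r\,\tr\bcE_q=(n-1)\frac{-d}{cr+d}\longrightarrow0,
\qquad
1-\frac{r^{n-1}}{J}=1-\Big(\frac{r}{cr+d}\Big)^{n-1}\longrightarrow1-c^{1-n}>0 .
\]
In your notation, $w$ is decreasing on $[\eps,\infty)$, with $rw(r)\to0$ while $f(r)\to(n-1)\log c$. Geometrically, when the curvature sits near $p$, the Hessian excess at the far endpoint decays, but the Jacobian defect persists. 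The compensating excess is carried by $\bcE_p$, whose trace here stays bounded away from $0$. So only the product of the two endpoint excesses controls $1-r^{n-1}/J$; neither factor does so alone.

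\textbf{What is missing} is a device that couples the two endpoints. You name the Wronskian relation but never extract an inequality from it. The paper takes the Jacobi field $Z$ with prescribed endpoint values $Z(0)=x$, $Z(r)=y$. Integration by parts and $K\leq0$ give
\[
-\langle x,Z'(0)\rangle+\langle y,Z'(r)\rangle=\int_0^r\big(|Z'|^2-\langle KZ,Z\rangle\big)\,dt\geq\frac{|y-x|^2}{r}.
\]
The left side is the quadratic form of the Dirichlet-to-Neumann matrix $\Lambda$. By the Wronskian identities, its diagonal blocks are $\bcE_p+I/r$ and $\bcE_q+I/r$, and its off-diagonal blocks are $-A(r)^{-1}$ and $-A(r)^{-T}$. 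Therefore
\[
\begin{pmatrix} r\bcE_p&(I-W)^T\\ I-W&r\bcE_q\end{pmatrix}\geq0,
\qquad W\coloneqq rA(r)^{-T},\qquad \det W=\frac{r^{n-1}}{J},
\]
which ties the Jacobian to both endpoint excesses simultaneously. Take a singular value decomposition $I-W=\sum_i s_i\zeta_i\xi_i^T$. Then $s_i^2\leq r^2\langle\bcE_p\xi_i,\xi_i\rangle\langle\bcE_q\zeta_i,\zeta_i\rangle$, and Cauchy--Schwarz gives $\|I-W\|_{S_1}\leq r\sqrt{\tr\bcE_p\,\tr\bcE_q}$. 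Rauch comparison puts the singular values of $W$ in $(0,1]$, so
\[
1-\frac{r^{n-1}}{J}=1-\prod_i s_i(W)\leq\sum_i\big(1-s_i(W)\big)\leq\|I-W\|_{S_1}.
\]
This is exactly the product bound your AM--GM step needs; combined with your Step~1, it completes the proof as in the paper.
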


\begin{proof}
Let $\gamma\colon[0,r]\to M$ be the unit-speed geodesic
from $p$ to $q$, with the parallel frame, the curvature
matrix $K(t)$, and the Jacobi matrix $A(t)$ of
Section~\ref{sec:two-point}. We control both sides of
the inequality by the matrix of second
derivatives of the distance at the two endpoints of
$\gamma$.
Let $Z$ be a Jacobi field perpendicular to $\gamma$,
regarded as a function $Z\colon[0,r]\to\R^{n-1}$ via the
parallel frame, so that
$
Z''+KZ=0.
$
Let $T$ be the solution operator of the Jacobi equation, i.e., the matrix with
$$
\begin{pmatrix}Z(r)\\ Z'(r)\end{pmatrix}
=\begin{pmatrix}T_{11}&T_{12}\\ T_{21}&T_{22}\end{pmatrix}
\begin{pmatrix}Z(0)\\ Z'(0)\end{pmatrix}.
$$
The Wronskian
$
Z_1^TZ_2'-(Z_1')^TZ_2
$
is constant for any two solutions $Z_1,Z_2$. Consequently,
\begin{equation}\label{eq:symplectic-relations}
T_{11}T_{12}^T=T_{12}T_{11}^T,
\qquad T_{12}^TT_{22}=T_{22}^TT_{12},
\qquad T_{11}^TT_{22}-T_{21}^TT_{12}=I.
\end{equation}
Here $T_{12}=A(r)$, which is invertible because there are no conjugate
points, and $T_{22}=A'(r)$.
For $x,y\in\R^{n-1}$, let $Z\coloneqq Z_{x,y}$ be the unique transverse
Jacobi field satisfying
$
Z(0)=x,
$
$
Z(r)=y.
$
Solving the transfer relation and using \eqref{eq:symplectic-relations},
we obtain
$$
\begin{pmatrix}-Z'(0)\\ Z'(r)\end{pmatrix}
=\Lambda\begin{pmatrix}x\\y\end{pmatrix},
\qquad
\Lambda\coloneqq
\begin{pmatrix}
T_{12}^{-1}T_{11}&-T_{12}^{-1}\\
-T_{12}^{-T}&T_{22}T_{12}^{-1}
\end{pmatrix}.
$$
The first two identities in \eqref{eq:symplectic-relations} show that
the diagonal blocks of $\Lambda$ are symmetric.
For the Jacobi field with endpoint values $x,y$, integration by parts
gives
\begin{equation}\label{eq:endpoint-energy}
-\langle x,Z'(0)\rangle+\langle y,Z'(r)\rangle
=\int_0^r\frac d{dt}\langle Z,Z'\rangle\,dt
=\int_0^r\big(|Z'|^2-\langle KZ,Z\rangle\big)\,dt.
\end{equation}
The second-variation formula for the distance between the two endpoints
says that the diagonal blocks of $\Lambda$ are the restrictions of
$\nabla^2r_q(p)$ and $\nabla^2r_p(q)$ to $\gamma'(0)^\perp$ and
$\gamma'(r)^\perp$. This can also be read directly from
\eqref{eq:endpoint-energy} by setting first $y=0$ and then $x=0$.
Consequently, 
\begin{equation}\label{eq:Ep-transfer}
\bcE_p=T_{12}^{-1}T_{11}-r^{-1}I,
\qquad
\bcE_q=T_{22}T_{12}^{-1}-r^{-1}I;
\end{equation}
recall that $\bcE_p$, $\bcE_q$ are the restrictions of the
remainders \eqref{eq:hessian-remainders} on $\gamma'(0)^\perp$,
$\gamma'(r)^\perp$. In particular $\bcE_q=A'(r)A(r)^{-1}-r^{-1}I$,
as in the proof of Lemma~\ref{lem:diagonal-estimates}.
For arbitrary endpoint values,
$$
\int_0^r|Z'|^2\,dt
\geq\frac1r\left|\int_0^rZ'\,dt\right|^2
=\frac{|y-x|^2}{r},
$$
and $-\langle KZ,Z\rangle\geq0$ since $K\leq0$. Therefore
$
\Lambda\geq\Lambda_0
\coloneqq\frac1r\begin{pmatrix}I&-I\\-I&I\end{pmatrix}.
$
Set
$
W\coloneqq rT_{12}^{-T}.
$
Using \eqref{eq:Ep-transfer}, the matrix inequality
$r(\Lambda-\Lambda_0)\geq0$ becomes
\begin{equation}\label{eq:block-positive}
\begin{pmatrix}
r\bcE_p&(I-W)^T\\
I-W&r\bcE_q
\end{pmatrix}\geq0.
\end{equation}

To extract the Jacobian defect from \eqref{eq:block-positive},
put $\alpha\coloneqq\tr \bcE_p$, $\beta\coloneqq\tr \bcE_q$, and $N\coloneqq I-W$.
Let $N=\sum_i s_i\,\zeta_i\xi_i^{T}$ be a singular value
decomposition of $N$, with $s_i\geq0$ and $\{\xi_i\}$,
$\{\zeta_i\}$ orthonormal bases of $\R^{n-1}$, so that
$N\xi_i=s_i\zeta_i$. By \eqref{eq:block-positive},
$
s_i^2\leq r^2\langle \bcE_p\xi_i,\xi_i\rangle
\langle \bcE_q\zeta_i,\zeta_i\rangle.
$
Hence, by the Cauchy--Schwarz inequality, the trace norm
$\|\cdot\|_{S_1}$, i.e., the sum of the singular values, satisfies
$$
\|I-W\|_{S_1}
=\sum_{i=1}^{n-1}s_i
\leq r
\sqrt{\sum_{i=1}^{n-1}\langle \bcE_p\xi_i,\xi_i\rangle}
\sqrt{\sum_{i=1}^{n-1}\langle \bcE_q\zeta_i,\zeta_i\rangle}
=r\sqrt{\alpha\beta}.
$$
By \eqref{eq:B-lower-bound}, every singular value of $A(r)=T_{12}$ is
at least $r$. Hence the singular values $s_i(W)$ lie in $(0,1]$.
Moreover,
$
\det W=r^{n-1}/\det A(r)=r^{n-1}/J.
$
For $0\leq s_i\leq1$,
we have
$
1-\prod_{i=1}^{n-1}s_i
\leq\sum_{i=1}^{n-1}(1-s_i).
$
It follows that
\begin{equation}\label{eq:det-trace-norm}
1-\frac{r^{n-1}}{J}
\leq\sum_{i=1}^{n-1}(1-s_i(W))
=\|I\|_{S_1}-\|W\|_{S_1}
\leq\|I-W\|_{S_1}
\leq r\sqrt{\alpha\beta}.
\end{equation}

It remains to pass from $\tr\bcE_p$, $\tr\bcE_q$ to the traces
over $T_p\Gamma$, $T_q\Gamma$. Write
$
\nu_p=u\nabla r_q+\eta_p,
$
$
|\eta_p|^2=1-u^2.
$
Because $\cE_p$ vanishes in the radial direction,
$
\tr_\Gamma\cE_p=\alpha-\langle\bcE_p\eta_p,\eta_p\rangle.
$
Since $\bcE_p\geq0$,
$
\langle\bcE_p\eta_p,\eta_p\rangle
\leq|\eta_p|^2\tr \bcE_p=(1-u^2)\alpha,
$
and therefore
$
\tr_\Gamma\cE_p\geq u^2\alpha.
$
The same argument gives $\tr_\Gamma\cE_q\geq v^2\beta$. Combining these equations with
\eqref{eq:det-trace-norm},  and the
arithmetic--geometric mean inequality, yields
$$
\tr_\Gamma\cE_p+\tr_\Gamma\cE_q\geq u^2\alpha+v^2\beta
\geq2|uv|\sqrt{\alpha\beta}
\geq\frac{2|uv|}{r}\Big(1-\frac{r^{n-1}}{J}\Big),
$$
which completes the proof.
\end{proof}

\subsection{The general integrand}\label{sec:G}

The curved analogue $\tilde F$ of the integrand $F$ in
\eqref{eq:flat-F} is defined, for $H=4$, by replacing $r^4$ with $J$:
\begin{equation}\label{eq:curved-pointwise}
5+\tilde{\Delta}_\Gamma\psi_q+\tilde{\Delta}_\Gamma\psi_p
+8\,\frac{uv(u^2+v^2)}{J}
\eqqcolon\frac{\tilde F}{r^4}.
\end{equation}
Note that by
\eqref{eq:curved-Ap-general} and \eqref{eq:flat-Ap-general},
$\tilde{\Delta}_\Gamma\psi_q$ here exceeds its Euclidean value
\eqref{eq:flat-Ap} by $\psi'(r)\tr_\Gamma\cE_p$, which
is $2r(r^2+2)\tr_\Gamma\cE_p/r^4$ since
$\psi'(r)=2/r+4/r^3$; similarly at $q$. Moreover
$$
8\,\frac{uv(u^2+v^2)}{J}
=8\,\frac{uv(u^2+v^2)}{r^4}
-8\,\frac{uv(u^2+v^2)}{r^4}\Big(1-\frac{r^4}{J}\Big).
$$
Hence, by \eqref{eq:flat-F}, with $\sigma=u+v$ and
$\tau=u-v$ as before,
\begin{equation}\label{eq:G-definition}
\tilde F=F+2r(r^2+2)\big(\tr_\Gamma\cE_p+\tr_\Gamma\cE_q\big)
-8uv(u^2+v^2)\Big(1-\frac{r^4}{J}\Big).
\end{equation}
The corrections to $F$ are precisely the two
curvature effects of Section~\ref{sec:two-point}: the
Hessian excesses, entering with the favorable positive sign, and
the Jacobian excess $1-r^4/J$, entering with the
unfavorable negative sign.
Proposition~\ref{lem:jacobi-endpoint} was designed to
weigh the first against the second, which yields:

\begin{lemma}\label{lem:G-positive}
For all distinct points $p,q\in\Gamma$,
$
\tilde F(p,q)\geq0.
$
Equality holds only if
$
u=v=r/2,
$
$
J=r^4,
$
and
$
\tr_\Gamma\cE_p=\tr_\Gamma\cE_q=0.
$
\end{lemma}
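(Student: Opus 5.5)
The plan is to bound the two curvature corrections in \eqref{eq:G-definition} against each other using Proposition~\ref{lem:jacobi-endpoint}, and to reduce to an elementary two-variable inequality. Put $\delta\coloneqq1-r^4/J$, so $0\leq\delta<1$ by \eqref{eq:B-lower-bound}. Also set $T\coloneqq\tr_\Gamma\cE_p+\tr_\Gamma\cE_q$, which is nonnegative since $\cE_p,\cE_q\geq0$. Proposition~\ref{lem:jacobi-endpoint} with $n=5$ gives $T\geq 2|uv|\delta/r$.

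I will split $T$ in half and write
\[
\tilde F=F+r(r^2+2)T+\Big(r(r^2+2)T-8uv(u^2+v^2)\delta\Big).
\]
The first two summands are nonnegative, by Section~\ref{sec:F} for $F$. Applying the Jacobi estimate to the last bracket gives
\[
\tilde F\geq F+r(r^2+2)T+2\delta\,\big[(r^2+2)|uv|-4uv(u^2+v^2)\big].
\]

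\emph{Case $uv\leq0$.} The bracket is nonnegative, so $\tilde F\geq F+r(r^2+2)T\geq0$.

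\emph{Case $uv>0$.} The bracket may be negative. I will drop the term $r(r^2+2)T$ here. Since $\delta\in[0,1)$ and the lower bound $F+2\delta[\cdots]$ is affine in $\delta$, it suffices to check the endpoints $\delta=0$ and $\delta=1$. At $\delta=0$ the bound is just $F\geq0$. At $\delta=1$ I will use \eqref{eq:flat-F}, which says $F=r^4(5+\tilde\Delta\psi_q+\tilde\Delta\psi_p)_{\rm flat}+8uv(u^2+v^2)$. The quartic term then cancels, leaving
\[
G\coloneqq r^4\big(5+(\tilde\Delta_\Gamma\psi_q+\tilde\Delta_\Gamma\psi_p)_{\rm flat}\big)+2(r^2+2)uv .
\]
By \eqref{eq:flat-Ap}, in the variables $\sigma,\tau$ this reads
\[
G=5r^4+8r^2+\big(2(r^2+4)+\tfrac12(r^2+2)\big)\sigma^2+\big(2(r^2+4)-\tfrac12(r^2+2)\big)\tau^2-8r(r^2+2)\sigma.
\]
The $\tau^2$ coefficient is positive. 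It therefore remains to show the quadratic in $\sigma$ is positive for $|\sigma|\leq2$ and all $r>0$.

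\emph{The elementary inequality.} This is the step I expect to need the most care. First I will compute the discriminant, which is negative for $r$ up to roughly $3$. For larger $r$, the vertex of the parabola lies at $\sigma>2$, so the minimum on $|\sigma|\leq 2$ is attained at $\sigma=2$. There I will verify directly that the resulting quartic in $r$ is positive, by a sum-of-squares or AM--GM grouping. If this endpoint check turns out too tight, I would instead keep part of $r(r^2+2)T$, or part of the $\tau^2$ term of $F$, to absorb the deficit.

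\emph{Equality.} For $\tilde F=0$, every nonnegative piece used above must vanish. In the case $uv\leq0$, the pieces $F$ and $T$ must vanish. Then $F=0$ gives $u=v=r/2$ with $r>0$, which contradicts $uv\leq0$. So equality can only occur with $uv>0$. In that case I will keep the $T$ term, giving $\tilde F\geq r(r^2+2)T+\big[(1-\delta)F+\delta G\big]$. The strict positivity $G>0$ then forces $\delta=0$, that is $J=r^4$. With $\delta=0$ we need $F=0$, hence $u=v=r/2$. Finally $T=0$, and since each trace is nonnegative, $\tr_\Gamma\cE_p=\tr_\Gamma\cE_q=0$.
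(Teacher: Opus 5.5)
\textbf{There is a genuine gap.} Your reduction is set up correctly, but the elementary inequality it rests on is false. Because you apply Proposition~\ref{lem:jacobi-endpoint} to only half of the Hessian correction $2r(r^2+2)T$, you discard too much.

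With $\tau=0$, the $\sigma$-discriminant of $G$ is
\[
64r^2(r^2+2)^2-4\big(\tfrac52r^2+9\big)(5r^4+8r^2)=2r^2(7r^4-2r^2-16).
\]
This is already positive for $r^2>(1+\sqrt{113})/7\approx1.66$, not only for $r\gtrsim 3$. Your endpoint quartic at $\sigma=2$ is $5r^4-16r^3+18r^2-32r+36$, and it equals $-4$ at $r=2$.

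The cleanest witness is the diameter configuration $r=2$, $u=v=1$. There $\sigma=r$ and $\tau=0$, so $F=0$, while
\[
G=2(r^2+2)uv-8uv(u^2+v^2)=12-16=-4.
\]
Your lower bound $(1-\delta)F+\delta G$ is then $-4\delta<0$ for every $\delta>0$. Nothing in $F$, including its $\tau^2$ part, is available to absorb this. Your equality argument invokes $G>0$, so it fails for the same reason.

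\textbf{The repair.} Your fallback of keeping more of $r(r^2+2)T$ is the right idea, but it must be pushed much further. At the witness point you already need at least two thirds of the full correction; the paper simply uses all of it.

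For $uv>0$, applying the Jacobi estimate to the whole of $2r(r^2+2)T$ gives
\[
\tilde F\geq F-4uv\delta Q,\qquad Q\coloneqq2(u^2+v^2)-r^2-2 .
\]
If $Q\leq0$, then $\tilde F\geq F\geq0$. If $Q>0$, then $r^2<2$ because $u^2+v^2\leq2$. In that case it suffices to check the endpoint $\delta=1$:
\[
F-4uvQ=(r^2+6)\tau^2+(3r^2+10)\Big(\sigma-\frac{4r(r^2+2)}{3r^2+10}\Big)^2+\frac{r^2(-r^4+10r^2+16)}{3r^2+10}>0 .
\]
At the witness point $Q=-2$, so this gives $\tilde F\geq8\delta\geq0$.

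\textbf{The equality case.} You do not need to hold any of $T$ in reserve.
\begin{itemize}
\item Strict positivity of $F-4uvQ$ rules out equality when $Q>0$.
\item If $Q\leq0$, then $\tilde F=0$ forces $F=0$, so $u=v=r/2$ and $Q=-2$.
\item This gives $\tilde F\geq8uv\delta$ with $uv=r^2/4>0$, hence $\delta=0$, i.e.\ $J=r^4$.
\item Now \eqref{eq:G-definition} itself reads $\tilde F=2r(r^2+2)T$, so $T=0$. Each trace is nonnegative, so both vanish.
\end{itemize}
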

\begin{proof}
By \eqref{eq:F-definition}, $F\geq0$, with equality exactly when
$u=v=r/2$. If $uv\leq0$, all three terms in \eqref{eq:G-definition}
are nonnegative, and equality in the lemma cannot occur in this case:
$F=0$ would force $u=v=r/2$, hence $uv=r^2/4>0$, a contradiction.
Assume therefore $uv>0$, and define
$
Q\coloneqq2(u^2+v^2)-r^2-2.
$
Proposition~\ref{lem:jacobi-endpoint} gives
$
\tr_\Gamma\cE_p+\tr_\Gamma\cE_q\geq 2uv(1-r^4/J)/r,
$
where $1-r^4/J\in[0,1)$ by \eqref{eq:B-lower-bound}.
Therefore
$$
\tilde F\geq F+4uv(r^2+2)(1-r^4/J)-8uv(u^2+v^2)(1-r^4/J)
=F-4uv(1-r^4/J)\,Q.
$$
If $Q\leq0$, then $\tilde F\geq F\geq0$.
Suppose $Q>0$. Since $0\leq1-r^4/J\leq1$, it is enough to prove
$
F-4uvQ\geq0.
$
Because $u^2+v^2\leq2$, the assumption $Q>0$ implies $r^2<2$. A single completion of squares gives
$$
F-4uvQ=(r^2+6)\tau^2
+(3r^2+10)\left(\sigma-\frac{4r(r^2+2)}{3r^2+10}\right)^2
+\frac{r^2(-r^4+10r^2+16)}{3r^2+10}>0,
$$
where the last term is positive for $r^2<2$. Hence equality cannot occur when $Q>0$.
For equality, only $uv>0$ and $Q\leq0$ remain. Equality in the first inequality of the proof forces $F=0$, hence $r=\sigma$, $\tau=0$, and $u=v=r/2$. Then $Q=-2$, and
$
\tilde F\geq8uv(1-r^4/J).
$
Thus $J=r^4$. Substitution in \eqref{eq:G-definition} gives
$\tilde F=2r(r^2+2)\big(\tr_\Gamma\cE_p+\tr_\Gamma\cE_q\big)$, so both
traces vanish.
\end{proof}

\subsection{The general CMC inequality}\label{sec:CMC-int}

First assume $H=4$. By Propositions~\ref{lem:green}
and~\ref{prop:boundary-integral}, the general Minkowski--Green and degree
identities have the same masses as their Euclidean
counterparts. Hence  \eqref{eq:flat-strategy}
holds verbatim with $J$ in place of $r^4$ in the last term. The
integrands are locally integrable across the diagonal by
Lemma~\ref{lem:diagonal-estimates}, so we may integrate
\eqref{eq:curved-pointwise} over $\Gamma\times\Gamma$, which yields
the generalization of \eqref{eq:CMC-inequality}:
\begin{equation}\label{eq:CMC-integrated}
|\Gamma|-|\Sph^4|
=\frac{1}{5|\Gamma|}\int_{\Gamma\times\Gamma}\frac{\tilde F}{r^4}
\geq0.
\end{equation}
Thus
$|\Gamma|\geq |\Sph^4|$, proving the normalized case. Rescaling yields \eqref{eq:CMC} for arbitrary positive constant $H$. As in the Euclidean setting, \eqref{eq:CMC-integrated} is a quantitative strengthening of Theorem~\ref{thm:CMC}: the deficit controls the chord integral, which is the basis of the rigidity argument below.

\subsection{Rigidity}\label{sec:rigidity}
Here we show that if $H=4$ and $|\Gamma|=|\Sph^4|$,
then $\Omega$ is isometric to $\B^5$. Equality in
\eqref{eq:CMC-integrated} gives $\tilde F=0$ almost
everywhere on $\Gamma\times\Gamma$; since $\tilde F$ is
continuous and nonnegative off the diagonal, it vanishes
for all $p\neq q$, so by Lemma~\ref{lem:G-positive}
\begin{equation}\label{eq:equality-conditions}
u(p,q)=v(p,q)=\frac{\dist(p,q)}2,
\qquad J(p,q)=\dist(p,q)^4,
\qquad \tr_\Gamma\cE_p+\tr_\Gamma\cE_q=0
\end{equation}
for all distinct $p,q\in\Gamma$. We argue in three
parts: 

\subsubsection{Convexity}
Let $\ell$ be a complete oriented geodesic, and 
$p,q\in\ell\cap\Gamma$ be distinct, with $q>p$, i.e. $q$ is positioned after $p$ with respect to the
orientation of $\ell$. Applying \eqref{eq:u-v} and \eqref{eq:equality-conditions}
to the segment $\gamma$ of $\ell$ from $p$ to $q$ gives
$$
\langle\gamma',\nu_p\rangle=-u(p,q)=-\frac{\dist(p,q)}2<0,
\qquad
\langle\gamma',\nu_q\rangle=v(p,q)=\frac{\dist(p,q)}2>0.
$$
Thus $\ell$ crosses $\Gamma$ transversely at every intersection
point, entering $\Omega$ at the earlier point of any pair and exiting
at the later one. In particular $\ell\cap\Gamma$ cannot contain three
points $p_1<p_2<p_3$: the pair $(p_1,p_2)$ makes $p_2$ an exit, while
the pair $(p_2,p_3)$ makes $p_2$ an entrance. Hence every complete geodesic meets
$\Gamma$ in at most two points.
Now suppose $\ell$ meets $\Omega$. Since $\Omega$ is open and
bounded, $\ell^{-1}(\Omega)$ is a nonempty bounded open subset of
$\R$, and each of its components is an interval whose two endpoints
lie in $\ell\cap\Gamma$. Two components would require at least three
such points. Thus every complete geodesic meets $\Omega$ in an interval, and
$\Omega$ is geodesically convex.

\subsubsection{Flatness}
Let $p,q\in\Gamma$ be distinct, $\gamma$ be the chord from $p$
to $q$, and $A(t)$ be its Jacobi matrix. By
\eqref{eq:B-lower-bound}, $A(r)^TA(r)\geq r^2I$, so each
eigenvalue of $A(r)^TA(r)$ is at least $r^2$; their product
is $\det\big(A(r)^TA(r)\big)=J^2=r^8$ by
\eqref{eq:equality-conditions}. Hence all four equal $r^2$,
and $A(r)^TA(r)=r^2I$.
Fix $w\neq 0$ and let $Z(t)\coloneqq A(t)w$, which is nonzero
for $t>0$. There,
\begin{equation}\label{eq:norm-convexity}
\frac{d^2}{dt^2}|Z|
=\frac{|Z'|^2|Z|^2-\langle Z,Z'\rangle^2}{|Z|^3}
-\frac{\langle Z,KZ\rangle}{|Z|}\geq0,
\end{equation}
so $|Z|$ is convex on $[0,r]$; since $|Z(0)|=0$ and
$|Z(r)|=r|w|$, this gives $|Z(t)|\leq t|w|$, whereas
\eqref{eq:B-lower-bound} gives the reverse inequality.
Hence $|A(t)w|=t|w|$, so $|Z|$ is linear in $t$ and
$\frac{d^2}{dt^2}|Z|=0$; as both terms in
\eqref{eq:norm-convexity} are nonnegative, both vanish,
and in particular $KZ=0$.
Since $A(t)$ is invertible for $t>0$ and $w$ is arbitrary,
$K(t)=0$. So the sectional curvatures of all
planes tangent to $\gamma$ vanish.
Fix $x_0\in\Omega$. For any unit vector $\xi\in T_{x_0}M$,
the maximal segment of the geodesic through $(x_0,\xi)$
contained in $\Omega$ has endpoints on $\Gamma$; so it is a chord, and the sectional
curvatures of all planes tangent to it vanish. Since
$\Omega$ is star-shaped with respect to $x_0$ by geodesic
convexity, a standard Jacobi field argument \cite[p. 157]{docarmo1992} shows that
$\exp_{x_0}^{-1}\colon\Omega\to T_{x_0}M\simeq\R^5$ is an isometry
onto its image with the Euclidean metric.

\subsubsection{Roundness}
By the previous step, the closure $\overline\Omega$ is isometric to a
compact convex domain in $\R^5$ bounded by $\Gamma$, since the isometry preserves geodesics and $\Omega$ is geodesically convex. As
the isometry also preserves the mean curvature, $\Gamma$ is a
closed embedded hypersurface of $\R^5$ with $H\equiv4$, so
it is a sphere by Alexandrov's theorem
\cite{alexandrov1962}, of radius $1$. Hence
$\overline\Omega$ is a unit ball.

\section{Proof of the Isoperimetric Inequality}\label{sec:obstacle}

Here we complete the proof of Theorem~\ref{thm:main}, using the CMC inequality established in Theorem \ref{thm:CMC}. Throughout this
section $M$ is $5$-dimensional. The
isoperimetric-profile argument  \cite{kleiner1992}, \cite[Thm.~7.1]{ghomi-spruck2022} reduces
the isoperimetric inequality to a mean-curvature estimate for
isoperimetric regions in a geodesic ball $B$. The boundaries of these
regions are $\C^{1,1}$, and have constant mean curvature
away from $\partial B$, so Theorem~\ref{thm:CMC} does
not apply to them directly. For a domain $\Omega\subset B$ with
boundary $\Gamma$, we call $\Gamma\cap\partial B$ the \emph{contact
set} and $\Gamma\setminus\partial B$ the \emph{free part}. We
need the following extension.

\begin{proposition}\label{prop:obstacle}
Let $B\subset M$ be a geodesic ball, and $\Omega\subset B$ be
a domain whose boundary $\Gamma$ is a compact embedded
$\C^{1,1}$ hypersurface, smooth on the free part. Suppose
that, for some constant $H_0>0$, the mean curvature $H$ of $\Gamma$
satisfies $H=H_0$ on the free part and $H\leq H_0$
almost everywhere on the contact set. Then
$
H_0^4\,|\Gamma|\geq 4^4|\Sph^4| .
$
\end{proposition}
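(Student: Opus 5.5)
We rerun the argument of Section~\ref{sec:CMC} with $H$ no longer constant, keeping track of the sign of the error. By scaling we may take $H_0=4$, and we must show $|\Gamma|\geq|\Sph^4|$. The degree identity (Proposition~\ref{prop:boundary-integral}) and the Jacobi-field estimate (Proposition~\ref{lem:jacobi-endpoint}) use only the embedding and the chord geometry, not $H$, and Lemma~\ref{lem:diagonal-estimates} already covers $\C^{1,1}$ boundaries. So both carry over unchanged.

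\textbf{Minkowski--Green identity.} Only this identity involves $H$, and we keep the variable $H$ in \eqref{eq:curved-Ap-general}. Write
$$
\tilde{\Delta}_\Gamma\psi_q=\big(\tilde{\Delta}_\Gamma\psi_q\big)\big|_{H=4}+(4-H(p))\,\psi'(r)\,u(p,q).
$$
Here $\psi'(r)=2/r+4/r^3>0$, and $4-H\geq0$ almost everywhere, with $4-H=0$ on the free part. We must first justify \eqref{eq:green-mass} at $\C^{1,1}$ regularity. On $\Gamma\setminus B_\eps(q)$ the function $\psi_q$ is $\C^{1,1}$, so $\nabla_\Gamma\psi_q$ is Lipschitz. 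The divergence theorem for Lipschitz vector fields then applies, with $\Delta_\Gamma\psi_q$ defined almost everywhere. The flux computation near $q$ uses only the graph estimates of Lemma~\ref{lem:diagonal-estimates}.

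\textbf{Integration.} Integrating \eqref{eq:curved-pointwise}, in its version with $H=4$ inserted, over $\Gamma\times\Gamma$ gives $\int\tilde F/r^4\geq0$ as before. Converting back to the true Laplacians gives
$$
5|\Gamma|\big(|\Gamma|-|\Sph^4|\big)=\int_{\Gamma\times\Gamma}\frac{\tilde F}{r^4}-\int_{\Gamma\times\Gamma}\psi'(r)\big[(4-H(p))u(p,q)+(4-H(q))v(p,q)\big].
$$
The second integral is supported on pairs where $p$ or $q$ lies in the contact set. There the weight $4-H\geq0$ is nonnegative, so it suffices to show
$$
\int_\Gamma \psi'(r)\,u(p,q)\,dq\leq0 \quad\text{for a.e. } p\in\Gamma\cap\partial B,
$$
together with the symmetric statement for $v$.

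\textbf{Main step: the sign at contact points.} We expect this step to be the main obstacle. At a contact point $p$, the hypersurface $\Gamma$ is tangent to $\partial B$ and $\nu_p$ is the outward normal of $B$. Every $q\in\Gamma$ lies in $B$, and $B$ is geodesically convex in a Cartan--Hadamard manifold. Hence the chord from $p$ to $q$ points into the closed half-space $\langle\gamma'(0),\nu_p\rangle\leq0$. Since \eqref{eq:u-v} defines $u=-\langle\gamma'(0),\nu_p\rangle$, we get $u\geq0$ pointwise. With the sign conventions as written, this gives the wrong sign for the correction term.

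If the sign conventions indeed work out this way, the fix I would try first is to use the flexibility in the calibration. Decompose the correction so that it pairs with the strictly positive slack $F$. Alternatively, use $H\leq4$ in the form $(4-H)u\psi'$ and absorb it via convexity of $B$. Concretely, on the contact set compare with the ball $B$ itself: its boundary sphere has mean curvature greater than $4$ there. Then combine the Minkowski--Green identity for $\Gamma$ with the degree identity to show that the contact contribution is bounded by the free-part defect.

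I am uncertain here. The cleanest outcome would be that, with the paper's conventions, the term enters with a favorable sign, so that $H\leq H_0$ gives $\int\tilde F/r^4\leq 5|\Gamma|(|\Gamma|-|\Sph^4|)$ directly. I would first recheck the sign of $u$ in \eqref{eq:curved-Ap-general} against the definition \eqref{eq:u-v} before pursuing the harder route.
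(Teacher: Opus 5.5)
\textbf{Gap: a sign error leaves the proposal unfinished.} Your ingredients match the paper's proof exactly: you keep $H$ variable in \eqref{eq:curved-Ap-general}, justify Proposition~\ref{lem:green} at $\C^{1,1}$ via the Lipschitz divergence theorem, and show $u\geq0$ at contact points by convexity of $B$. The proposal nevertheless stops short of a proof, because of a sign error at the step ``Converting back to the true Laplacians.'' Your decomposition $\tilde{\Delta}_\Gamma\psi_q=(\tilde{\Delta}_\Gamma\psi_q)|_{H=4}+(4-H(p))\psi'(r)u$ is correct. It gives, almost everywhere on $\Gamma\times\Gamma$,
\[
5+\tilde{\Delta}_\Gamma\psi_q+\tilde{\Delta}_\Gamma\psi_p+8\,\frac{uv(u^2+v^2)}{J}
=\frac{\tilde F}{r^4}+\psi'(r)\big[(4-H(p))u+(4-H(q))v\big].
\]
The Minkowski--Green identity \eqref{eq:green-mass} is a statement about the true Laplacians: it is the divergence theorem for $\nabla_\Gamma\psi_q$, not a fact about the $H=4$ expressions. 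So it is the left side that integrates to $5|\Gamma|(|\Gamma|-|\Sph^4|)$, using \eqref{eq:boundary-integral}, $C_\psi=4$ and $|\Sph^3|=\tfrac34|\Sph^4|$. Hence
\[
5|\Gamma|\big(|\Gamma|-|\Sph^4|\big)=\int_{\Gamma\times\Gamma}\frac{\tilde F}{r^4}+\int_{\Gamma\times\Gamma}\psi'(r)\big[(4-H(p))u+(4-H(q))v\big],
\]
with a plus sign, not a minus sign, in front of the correction.

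\textbf{With the correct sign, your argument finishes the proof.} What is needed is $u(p,q)\geq0$ for $p$ in the contact set, which is precisely what your convexity argument proves. Since $v(p,q)=u(q,p)$, the same gives $v\geq0$ for $q$ in the contact set. Together with $\psi'>0$, $4-H\geq0$ a.e., and $4-H=0$ on the free part, the correction integrand is pointwise nonnegative. Since $\tilde F\geq0$ by Lemma~\ref{lem:G-positive}, $|\Gamma|\geq|\Sph^4|$ follows at once; this is exactly the paper's proof. The ``main obstacle'' disappears, and the alternative routes you sketch are unnecessary. As stated they are also not arguments, and the comparison with $\partial B$ runs the wrong way: since $\Gamma$ touches $\partial B$ from inside, at contact points where $\Gamma$ is twice differentiable one has $H_{\partial B}\leq H\leq 4$, not $H_{\partial B}>4$. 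As submitted, the proposal asserts an incorrect identity and reduces to the false requirement $\int_\Gamma\psi'u\,dq\leq0$, so it is not a proof until this sign is corrected.
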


\begin{proof}
After rescaling the metric, we
may assume that $H_0=4$, and follow the proof of
Theorem~\ref{thm:CMC}. Lemma~\ref{lem:diagonal-estimates} holds for $\C^{1,1}$
hypersurfaces, as does Proposition~\ref{prop:boundary-integral}.
Proposition~\ref{lem:jacobi-endpoint} and Lemma~\ref{lem:G-positive}
are pointwise statements about chords and tangent hyperplanes, so they apply to $\Gamma$. Applying Proposition~\ref{lem:green}
requires the following argument.

Since $\Gamma$ is $\C^{1,1}$, its normal $\nu$ is Lipschitz,
so by Rademacher's theorem $H=\operatorname{div}_\Gamma\nu$
is defined almost everywhere and essentially bounded. Fix
$q\in\Gamma$ and let $X\coloneqq\nabla_\Gamma\psi_q$, a locally
Lipschitz tangent vector field on $\Gamma\setminus\{q\}$.
In a local $\C^{1,1}$ parametrization of $\Gamma$, with
Lipschitz metric coefficients $g_{ij}$,
$$
\Delta_\Gamma\psi_q=\operatorname{div}_\Gamma X
=\frac{1}{\sqrt{\det g}}\,\partial_i\big(\sqrt{\det g}\,X^i\big),
$$
where $\sqrt{\det g}\,X^i$ is locally Lipschitz. Since the
distributional derivative of a Lipschitz function is its
almost-everywhere derivative \cite[\S4.2.3]{evans-gariepy2015},
$\Delta_\Gamma\psi_q$ is $L^\infty_{\rm loc}$  on
$\Gamma\setminus\{q\}$. At every point
where $\nu$ is differentiable, the computation of
$\operatorname{div}_\Gamma X$ that yields
\eqref{eq:laplacian-convention} for smooth $\Gamma$ applies; hence \eqref{eq:laplacian-convention}, and with it
\eqref{eq:curved-Ap-general} and \eqref{eq:Ap-bound}, hold
almost everywhere on $\Gamma\setminus\{q\}$. Finally, the
divergence theorem
$\int_{U_\eps}\operatorname{div}_\Gamma X
=\int_{\partial U_\eps}\langle X,\eta_\eps\rangle$ holds for
Lipschitz $X$ on the $\C^{1,1}$ domains $U_\eps$ of
Proposition~\ref{lem:green} \cite[\S4.3]{evans-gariepy2015},
and the flux computation \eqref{eq:green-flux} uses only
Lemma~\ref{lem:diagonal-estimates}, which was proved at
$\C^{1,1}$ regularity. Hence Proposition~\ref{lem:green}
holds for $\Gamma$.

The only new feature here is that $H\neq4$ on the contact
set, whereas $\tilde F$ was built with $H=4$. Let $\tilde F$
be given by \eqref{eq:G-definition}, which was obtained from
\eqref{eq:curved-pointwise} by \eqref{eq:curved-Ap-general}
with $H=4$. Since the only dependence of
\eqref{eq:curved-Ap-general} on $H$ is through the term
$-\psi'(r)Hu$, and $\psi'(r)=2/r+4/r^3$, we have, almost
everywhere on $\Gamma\times\Gamma$,
$$
5+\tilde{\Delta}_\Gamma\psi_q+\tilde{\Delta}_\Gamma\psi_p
+8\,\frac{uv(u^2+v^2)}{J}
=\frac{\tilde F}{r^4}
+2\,\frac{r^2+2}{r^3}\Big(\big(4-H(p)\big)u+\big(4-H(q)\big)v\Big).
$$
Integrating over
$\Gamma\times\Gamma$, using Proposition~\ref{lem:green} and the
degree identity \eqref{eq:boundary-integral} on the left, yields
$$
|\Gamma|\big(|\Gamma|-|\Sph^4|\big)
=\frac15\int_{\Gamma\times\Gamma}\frac{\tilde F}{r^4}
+\frac25\int_{\Gamma\times\Gamma}\frac{r^2+2}{r^3}
\Big(\big(4-H(p)\big)u+\big(4-H(q)\big)v\Big),
$$
where the integrals are absolutely convergent by
Lemma~\ref{lem:diagonal-estimates}. The first term on the right is
nonnegative by Lemma~\ref{lem:G-positive}. We claim that the second is
nonnegative as well. Note that $4-H\geq0$ almost everywhere on
$\Gamma$, and $4-H=0$ on the free part. Furthermore, we always
have $v(p,q)=u(q,p)$. Thus it suffices to show
that $u(p,q)\geq0$ whenever $p\in\Gamma\cap\partial B$. Since $\Gamma$
is $\C^1$ and lies in $B$, it is tangent to $\partial B$ at $p$, so
$\nu_p$ is the outward normal of $B$ at $p$. Geodesic balls in a
Cartan--Hadamard manifold are convex. Hence the geodesic $\gamma$ from
$p$ to $q$ lies in $B$, so $\langle\gamma'(0),\nu_p\rangle\leq0$, and
$u=-\langle\gamma'(0),\nu_p\rangle\geq0$ by \eqref{eq:u-v}. Hence
$|\Gamma|\geq|\Sph^4|$, and undoing the normalization gives
$H_0^4|\Gamma|\geq4^4|\Sph^4|$.
\end{proof}

Finally we establish the main result of this work. In
Theorem~\ref{thm:main}, $\Omega$ is any bounded measurable set, its
\emph{volume} $|\Omega|$ is its $5$-dimensional Hausdorff measure,  and its \emph{perimeter} $|\Gamma|$ is the $4$-dimensional
Hausdorff measure of the reduced boundary $\partial^*\Omega$
\cite{maggi2012}, which is the area of $\Gamma$ when $\Gamma$ is a $\C^1$ hypersurface. Both sides of \eqref{eq:main} are
unchanged when $\Omega$ is modified on a set of measure zero, and the
equality statement is understood accordingly: equality holds only if
such a modification of $\Omega$ is a domain isometric to a Euclidean
ball.

\begin{proof}[Proof of Theorem~\ref{thm:main}]
Fix a geodesic ball $B\subset M$ with center $o$, and $0<V<|B|$. By
\cite[Lem.~7.2]{ghomi-spruck2022}, there exists an isoperimetric
region $\Omega^*\subset B$ with $|\Omega^*|=V$, that is, a set of least
perimeter among subsets of $B$ of volume $V$. Since
$\textup{dim}(M)\leq 7$, the same lemma gives that
$\Gamma\coloneqq\partial\Omega^*$ is $\C^{1,1}$, smooth with constant
mean curvature $H_0$ on $\Gamma\setminus\partial B$, and has mean
curvature $H\leq H_0$ almost everywhere on $\Gamma\cap\partial B$.
(In \cite{ghomi-spruck2022}, $H$ denotes the average of the principal curvatures; here it
is their sum.) Moreover $H_0>0$. Indeed, let
$f\coloneqq\dist(o,\cdot)^2/2$. By \eqref{eq:laplacian-convention}
and Hessian comparison, $\nabla^2f\geq g$, we have
$$
\Delta_\Gamma f
=\tr_\Gamma\nabla^2f-H\langle\nabla f,\nu\rangle
\geq4-H\langle\nabla f,\nu\rangle
$$
almost everywhere on $\Gamma$. At contact points $\nabla f=R\nu$,
where $R$ is the radius of $B$, so $(H_0-H)\langle\nabla f,\nu\rangle$
is nonnegative there and vanishes elsewhere. Integrating over the
closed hypersurface $\Gamma$ gives
$$
0=\int_\Gamma\Delta_\Gamma f
\geq4|\Gamma|-H_0\int_\Gamma\langle\nabla f,\nu\rangle
=4|\Gamma|-H_0\int_{\Omega^*}\Delta f ,
$$
and $\Delta f\geq5$, so $H_0\geq4|\Gamma|/\int_{\Omega^*}\Delta f>0$.
Thus Proposition~\ref{prop:obstacle} applies and
yields
$$
H_0^4\,|\partial\Omega^*|\geq 4^4|\Sph^4| .
$$

Let $\mathcal I_B(V)$ denote the isoperimetric profile of $B$, so
that $\mathcal I_B(V)=|\partial\Omega^*|$ above, and let $H_0(V)$
denote the constant mean curvature of the free part of the boundary of an
isoperimetric region of volume $V$, as above. By the proof of
\cite[Thm.~7.1]{ghomi-spruck2022} and the references given there,
$\mathcal I_B$ is continuous and increasing with
$\mathcal I_B(0^+)=0$, and $\mathcal I_B'(V)=H_0(V)$ at almost every
$V$, in our normalization of the mean curvature.
Proposition~\ref{prop:obstacle} thus gives, for almost every $V$,
$$
\big(\mathcal I_B^{5/4}\big)'
=\tfrac54\,\mathcal I_B^{1/4}\,\mathcal I_B'
=\tfrac54\big(\mathcal I_B'^{\,4}\,\mathcal I_B\big)^{1/4}
\geq\tfrac54\big(4^4|\Sph^4|\big)^{1/4}
=5|\Sph^4|^{1/4}.
$$
Since $\mathcal I_B^{5/4}$ is increasing,
$
\mathcal I_B(V)^{5/4}
\geq\int_0^V\big(\mathcal I_B^{5/4}\big)'
\geq5|\Sph^4|^{1/4}\,V,
$
and $|\Sph^4|=5|\B^5|$ turns this into
$$
\mathcal I_B(V)\geq|\Sph^4|\Big(\frac{V}{|\B^5|}\Big)^{4/5},
$$
the Euclidean isoperimetric inequality for subsets of $B$. Since
every bounded set lies in some geodesic ball,
\eqref{eq:main} follows.

Finally, suppose that equality holds in \eqref{eq:main}
for a bounded set $\Omega$. Then $\Omega$ minimizes perimeter among
bounded sets of volume $|\Omega|$; in particular it is an isoperimetric
region in any geodesic ball containing it compactly, and
\cite[Lem.~7.2]{ghomi-spruck2022} provides a representative of $\Omega$
with smooth compact embedded boundary, since $\Omega$ lies compactly
in the ball and $n=5<8$, which we again denote by $\Gamma$. Since equality holds in \eqref{eq:main},
$\Omega$ also minimizes the isoperimetric deficit
$|\partial(\cdot)|-|\Sph^4|\big(|\cdot|/|\B^5|\big)^{4/5}$
among bounded sets, and the first variation of this functional
gives $H=4|\Gamma|/(5|\Omega|)$. Combined with
$|\Gamma|^5=5^5|\B^5|\,|\Omega|^4$, this yields
$H^4|\Gamma|=4^4|\Gamma|^5/(5^4|\Omega|^4)=4^4|\Sph^4|$. By
Theorem~\ref{thm:CMC}, $\Omega$ is isometric to a Euclidean ball.
\end{proof}

\begin{note}\label{note:dimension-three}
The calibration of Section~\ref{sec:flat-five} has a simpler
analogue in dimension $3$, where the Minkowski--Green identity alone
suffices. Take $\psi(r)\coloneqq2\log r$. Then $\psi'=2/r$
satisfies \eqref{eq:psi-pole} with $n=3$ and $C_\psi=2$, so
Proposition~\ref{lem:green} gives
$\int_\Gamma\tilde{\Delta}_\Gamma\psi_q=-2|\Sph^1|=-4\pi$,
and \eqref{eq:curved-Ap-general} with $n=3$ gives
\begin{gather*}
2+\tilde{\Delta}_\Gamma\psi_q+\tilde{\Delta}_\Gamma\psi_p\\
=\frac2{r^2}\Big((r-u-v)^2+(u-v)^2
+r\big(\tr_\Gamma\cE_p+\tr_\Gamma\cE_q\big)\Big)
+\frac2r\Big(\big(2-H(p)\big)u+\big(2-H(q)\big)v\Big).
\end{gather*}
The first term on the right is nonnegative, since
$\cE_p,\cE_q\geq0$. When $H\equiv2$ the last term vanishes,
and integrating over $\Gamma\times\Gamma$ yields
$|\Gamma|(|\Gamma|-4\pi)\geq0$, the $3$-dimensional
analogue of \eqref{eq:CMC}. For an isoperimetric region in
a geodesic ball, as in Proposition~\ref{prop:obstacle}, the
last term is nonnegative as well: $2-H\geq0$ almost
everywhere on $\Gamma$, $2-H=0$ off the contact set, and
$u,v\geq0$ at contact points, as shown in the proof of that
proposition. The reduction in this section therefore
applies verbatim and yields a new proof of the
$3$-dimensional Cartan--Hadamard inequality
\cite{kleiner1992}.
\end{note}

\begin{note}\label{note:willmore}
The inequality $|\Gamma|\geq4\pi$ established in
Note~\ref{note:dimension-three}, which is the analogue of Theorem~\ref{thm:CMC} in dimension $3$,
follows also from the 
inequality
$
\int_\Gamma (H/2)^2\geq4\pi,
$
which holds in Cartan--Hadamard $3$-manifolds
\cite{schulze2020}. It is not known whether
$
\int_\Gamma(|H|/(n-1))^{n-1}\geq|\Sph^{n-1}|
$
for closed hypersurfaces of Cartan--Hadamard
$n$-manifolds when $n\geq4$.   Since $H$ is constant in
Theorem~\ref{thm:CMC}, inequality \eqref{eq:CMC} is
equivalent to $\int_\Gamma(H/4)^4\geq|\Sph^4|$, and so
settles the case of this problem for embedded CMC hypersurfaces
in $M^5$. Furthermore, \eqref{eq:CMC} coincides with the
Minkowski inequality
$
\int_\Gamma H\geq(n-1)|\Sph^{n-1}|^{\frac{1}{n-1}}|\Gamma|^{\frac{n-2}{n-1}}
$
for CMC hypersurfaces, which has been conjectured for
convex hypersurfaces of Cartan--Hadamard manifolds, and
established for $n=3$ \cite{ghomi-spruck2023c,hong2026}. 
\end{note}

\section*{Acknowledgement}
The AI tools Claude (Anthropic) and ChatGPT (OpenAI) were used in the
preparation of this manuscript. The authors have reviewed all AI-assisted
content and take full responsibility for the final manuscript.

\bibliography{references}

\end{document}